\DeclareMathOperator*{\argmin}{argmin}
\newcommand{\red}[1]{\begin{color}{red}#1\end{color}}
\newtheorem{theorem}{Theorem}
\newtheorem{proposition}{Proposition}
\newtheorem{lemma}{Lemma}
\newtheorem{remark}{Remark}
\newtheorem{assumption}{Assumption}
\begin{document}
	
	\title{\bf An active-set based recursive approach for solving convex isotonic regression with generalized order restrictions}

	\author{
		{Xuyu Chen}\thanks{School of Mathematical Sciences, Fudan University, Shanghai, 200433, China, chenxy18@fudan.edu.cn}, \;
		Xudong Li\thanks{School of Data Science, Fudan University, Shanghai, 200433, China, lixudong@fudan.edu.cn}, \;
		{Yangfeng Su}\thanks{School of Mathematical Sciences, Fudan University, Shanghai, 200433, China, yfsu@fudan.edu.cn}
	}

	\date{\today}
	\maketitle
	
	\begin{abstract}
	This paper studies the convex isotonic regression with generalized order restrictions induced by a directed tree. 
	The proposed model covers various intriguing optimization problems with shape or order restrictions, including the generalized nearly isotonic optimization and the total variation on a tree. Inspired by the success of the pool-adjacent-violator algorithm and its active-set interpretation, we propose an active-set based recursive approach for solving the underlying model. 
	Unlike the brute-force approach that traverses an exponential number of possible active-set combinations, our algorithm has a polynomial time computational complexity under mild assumptions. 	
	
	\end{abstract}
	\noindent
	\textbf{Keywords:}
	Active set methods; convex isotonic regression; generalized order restrictions 
	\\
	\medskip
	\noindent
	\textbf{AMS subject classifications:} 90C25, 90C30

\section{Introduction}
Given a directed tree $G=(V,E)$, we consider the following convex isotonic regression problem with generalized order restrictions:
\begin{equation}
\label{prob:tree_gnio}
\min_{x\in\Re^{|V|}} \; \sum_{i \in V} f_i(x_i) + \sum_{(i,j) \in E} \lambda_{{i,j}} (x_i - x_{j})_{+} + \sum_{(i,j) \in E} \mu_{i,j} (x_{j} - x_{i})_{+},
\end{equation}
where  for each $i \in V,$ $f_i:\Re \to \Re$ is a convex loss function, $\lambda_{i,j}$ and $\mu_{i,j}$ for $(i,j) \in E,$ are possibly  infinite nonnegative scalars, i.e., $0\le \lambda_{i,j}, \mu_{i,j} \le +\infty$, and $(x)_{+} = \max(0,x)$ is the nonnegative part of $x$ for any $x \in \Re$.
In \eqref{prob:tree_gnio}, when $\lambda_{i,j} = +\infty$ (respectively, $\mu_{i,j} = +\infty$), the corresponding term $\lambda_{i,j}(x_{i} - x_{j})_+$ (respectively,  $\mu_{i,j}(x_{j} - x_{i})_+$) should be understood as the indicator function $\delta(x_i, x_{j} \mid x_i - x_j \le 0)$ (respectively, $\delta(x_i, x_{j} \mid x_{i} -  x_{j} \ge 0)$), or equivalently the constraint $x_i - x_{j} \le 0$ (respectively, $x_{i} -  x_{j} \ge 0$). 
See Figure \ref{fig:trees} for some simple examples of directed trees.
\begin{figure}[htbp]
	\centering
	\begin{subfigure}{0.32 \linewidth}
		\parbox[][2.2cm][c]{\linewidth}{
			\centering
		\begin{tikzpicture}[shorten >= 1pt]
		\tikzstyle{vertex} = [circle,fill=black!25,minimum size=17pt,inner sep=0pt]
        \node[vertex] (1) { $1$ };
        \node[vertex] (2) [right of = 1] { $2$ };
        \node (3) [right of = 2] {$\cdots$};
        \node[vertex] (n) [right of = 3] { $n$};

        \draw[->]  (1) -- (2);
        \draw[->]  (2) -- (3);
        \draw[->]  (3) -- (n);
        \hfill
		\end{tikzpicture}
		}
		\caption{chain}
		\label{fig1a:chain}
	\end{subfigure}
	\centering
	\begin{subfigure}{0.32 \linewidth}
	\parbox[][2.2cm][c]{\linewidth}{
		\centering
		\begin{tikzpicture}[shorten >= 1pt]
		\tikzstyle{vertex} = [circle,fill=black!25,minimum size=17pt,inner sep=0pt]
        \node[vertex] (1) { $1$ };
        \node[vertex] (2) [below right = 1] { $2$ };
        \node[vertex] (3) [below left =  1] { $3$ };
        \node[vertex] (4) [below right = 2] { $4$ };
        \node[vertex] (5) [below left of = 3] { $5$ };
        \node[vertex] (6) [below right of = 3] { $6$ };
        \draw[->] (1) -- (2);
        \draw[->] (1) -- (3);
        \draw[->] (3) -- (5);
        \draw[->] (2) -- (4);
        \draw[->] (3) -- (6);
		\end{tikzpicture} }
		\caption{arborescence}
		\label{fig1b:rooted}
	\end{subfigure}
	\centering
	\begin{subfigure}{0.32 \linewidth}
	\parbox[][2.2cm][c]{\linewidth}{
		\centering
		\begin{tikzpicture}[shorten >= 1pt]
		\tikzstyle{vertex} = [circle,fill=black!25,minimum size=17pt,inner sep=0pt]
        \node[vertex] (1) { $1$ };
        \node[vertex] (2) [below left = 1] { $2$ };
        \node[vertex] (3) [below right =  1] { $3$ };
        \node[vertex] (4) [below left of = 3] { $4$ };
        \node[vertex] (5) [below right of = 3] { $5$ };

        \draw[->] (2) -- (1);
        \draw[->] (3) -- (1);
        \draw[->] (3) -- (5);
        \draw[->] (3) -- (4);

		\end{tikzpicture} 
		}
		\caption{general directed tree}
		\label{fig1c:directed}
	\end{subfigure}
	\caption{Examples of directed trees. A directed tree is a directed graph whose underlying graph is a tree, and the directed trees are also referred to as directed acyclic graphs.}
	\label{fig:trees}
\end{figure}
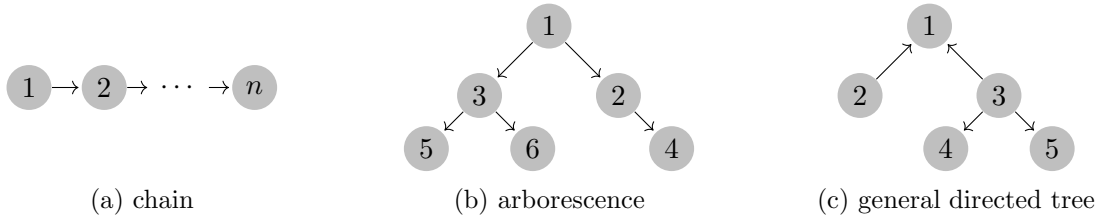

As one can observe, the involvement of the directed tree $G$ makes problem \eqref{prob:tree_gnio} a rather general model containing many interesting variants as special cases. Here, for simplicity, we only mention two of them.
The first one is the \textit{generalized nearly isotonic optimization} (GNIO) problem proposed in \cite{chen2021dynamic}:
\begin{equation} 
\label{prob:gnio}
\min_{x\in\Re^n} \; \sum_{i=1}^{n} f_i(x_i) + \sum_{i=1}^{n-1} \lambda_{i} (x_i - x_{i+1})_{+} + \sum_{i=1}^{n-1} \mu_{i} (x_{i+1} - x_{i})_{+},
\end{equation}
which is clearly a special case of \eqref{prob:tree_gnio} with $G$ chosen as a chain, as illustrated in Figure \ref{fig1a:chain}.
As is mentioned in \cite{chen2021dynamic}, model \eqref{prob:gnio} recovers, as special cases, many
classic problems in shape restricted statistical regression, including isotonic regression \cite{brunk1955maximum,
chakravarti1989isotonic}, unimodal regression \cite{frisen1986unimodal,stout2008unimodal}, and nearly isotonic regression \cite{tibshirani2011nearly}.
The second one is the \textit{total variation on a tree} considered in \cite{kolmogorov2016total}:
\begin{equation}
\label{prob:tv_trees}
\min_{x\in\Re^{|V|}} \; \sum_{i\in V} f_i(x_i) + \sum_{(i,j)\in E} w_{i,j} |x_i - x_{j}|, 
\end{equation}
where $G = (V,E)$ is a directed tree and each $f_i$ is assumed to be piecewise linear or piecewise quadratic.
Other special cases of model \eqref{prob:tree_gnio} have also been examined in the literature, for example, \cite{chakravarti1992isotonic,yu2016exact} studied the isotonic regression problems with partial order restrictions induced by an arborescence. These special cases, as well as their applications in statistic inference \cite{Silvapulle2005constrained}, operations research \cite{ayer1955empirical}, signal processing \cite{Alfred1993localiso,xiaojun2016videocut}, medical prognosis \cite{Ryu2004medical},
and traffic and climate data analysis \cite{Matyasovszky2013climate,Wu2015traffic}, reveal the importance and necessity of studying model \eqref{prob:tree_gnio}.

To the best of our knowledge, there is currently no efficient algorithm available for directly solving the general model \eqref{prob:tree_gnio}. 
However, certain special cases of the model can be solved by  existing algorithms. For example, the GNIO problem \eqref{prob:gnio} can be efficiently solved by employing a dynamic programming approach designed in \cite{chen2021dynamic}.
Moreover, assuming boundedness of the decision variables, the KKT based fast algorithm proposed in \cite{hochbaum2021unified} can also solve the GNIO problem. However, both algorithms rely heavily on the underlying chain structure, and therefore cannot be applied to solve the general model \eqref{prob:tree_gnio} that involves a directed tree. If $G$ is a chain and each $f_i$ is quadratic, the total variation problem \eqref{prob:tv_trees} reduces to the well-known $\ell_2$ total variation denoising problem, which has been extensively studied in signal processing \cite{condat2013direct, hoefling2009path}. The direct algorithm \cite{condat2013direct} and the taut-string algorithms \cite{barbero2018modular} are considered to be the state-of-the-art for solving the $\ell_2$ total variation denoising problem.
Meanwhile, if $G$ is assumed to be a directed tree and each $f_i$ is assumed to be continuous piecewise linear or piecewise quadratic with a finite number of breakpoints in \eqref{prob:tv_trees},
the message passing algorithm studied in \cite{kolmogorov2016total} can be applied. 
However, these algorithms can not handle problem \eqref{prob:tree_gnio} with general convex loss functions $f_i$ involved.

There is also another line of work dedicated to solving special cases of problem \eqref{prob:tree_gnio}. 
In the 1950s, Ayer in \cite{ayer1955empirical}  proposed the famous \textit{Pool-Adjacent-Violator algorithm} (PAVA) for solving the following isotonic regression problem:
\begin{equation}
\label{prob:iso}
\begin{aligned}
&\min_{x \in \Re^n} \quad \frac{1}{2} \sum_{i=1}^n  (x_i - y_i)^2, \\
& \quad \text{s.t.} \quad x_{1}  \le x_{2}\le \ldots \le x_{n},
\end{aligned}
\end{equation}
which is clearly a special case of problem \eqref{prob:tree_gnio}.
The PAVA has been widely regarded as the state-of-the-art technique for solving the isotonic regression problem since its inception. 
Later in \cite{best1990active}, Best and Chakravarti  discovered that the PAVA is, in fact, a dual feasible active set method for solving \eqref{prob:iso}. In \cite{best2000minimizing}, the PAVA was generalized to handle \eqref{prob:iso} but with the least squares objectives replaced by general separable convex loss functions. In \cite{yu2016exact}, Yu and Xing further generalized the PAVA to solve convex separable minimization with order constraints induced by an arborescence. However, the generalized regularizers present in the objective of model \eqref{prob:tree_gnio} were not studied in \cite{yu2016exact}. As far as we know, it remains unclear whether the ideas behind the PAVA can be adopted to solve the more general model \eqref{prob:tree_gnio}.

Encouraged by the successes of the PAVA and its variants in solving special cases of the generalized convex isotonic regression problem \eqref{prob:tree_gnio}, we propose a novel active-set based algorithm in this paper.
Our approach differs from the brute-force method that explores a potentially exponential number of different active sets. 
Instead, a recursive approach is proposed to accelerate the search for the desired active sets.  We show that problem \eqref{prob:tree_gnio} can be tackled via recursively solving a sequence of smaller subproblems. For these subproblems, special recursive structures of the corresponding Karush-Kuhn-Tucker (KKT) conditions are carefully examined, which further allows us to design a novel active-set based recursive approach (ASRA). In particular, this approach enables us to derive semi-closed formulas of the  optimal solutions to the aforementioned recursive subproblems. Under mild assumptions, we further show that the ASRA enjoys a polynomial time computational complexity for solving problem \eqref{prob:tree_gnio}.

The subsequent sections of this paper are organized as follows. 
Section \ref{sec:preliminary} covers the necessary preliminaries associated with problem \eqref{prob:tree_gnio}, including fundamental concepts in graph theory and the corresponding KKT conditions. In addition, we describe a naive active-set method to solve  \eqref{prob:tree_gnio}. Our recursive approach, the ASRA, is described in detail in Section \ref{sec:algorithm}. Finally, we conclude the paper in Section \ref{sec:conclusion}. The Appendix includes an example of how to apply the ASRA to solve a simple instance of \eqref{prob:tree_gnio}.

\section{Preliminaries}
\label{sec:preliminary}
We start with some relevant preliminaries in graph theory.
A directed tree $G = (V,E)$ is a directed graph whose underlying graph is a tree, and an arborescence (also known as rooted directed tree)  \cite{deo1974graph,west2001introduction} is a directed tree with exactly one node of zero in-degree.
The node is also referred to as the \textit{root} of the arborescence. 
Let $G = (V,E)$ and $B = (V_B, E_B)$ be two directed trees. If $V_B \subseteq V$ and $E_B \subseteq E$, then we say that $B$ is a \textit{subtree} of $G$, denoted by $B \subset G$.
Two subtrees are \textit{disjoint} if their node sets are disjoint. 
Given $P = \{B_k \}_{k=1}^K$ as a collection of disjoint subtrees of a certain directed tree $G = (V,E)$, if $V = \cup_{k=1}^K V_{B_k}$, then $P$ is said to be a \textit{partition} of $G$.

For a given directed tree $G=(V,E)$, we can choose any node $l\in V$ as the {\it ancestor} of $G$.
Then, for any $i,j\in V$, we say that $j$ is a child of $i$, denoted by $j \triangleleft i$, if the undirected path connecting $l$ and $i$ is strictly contained in the one connecting $l$ and $j$.
For example, if we pick the node $2$ as the ancestor in the directed tree presented in Figure \ref{fig1c:directed}, then we have $3 \triangleleft 1 \triangleleft 2$.
Now, let $D \in \Re^{|V| \times |E|}$ be the node-arc incidence matrix associated with $G$. We know from \cite{bertsimas1997introduction} that ${\rm rank}(D) = |E|$ and the matrix $\tilde D_l \in \Re^{|E|\times |E|}$ obtained by deleting the $l$-th row from $D$ is invertible. Given a vector $b\in\Re^{|E|}$, we obtain in the following lemma {a closed-form} formula for the solution to the linear system $\tilde D_l z = b$.
\begin{lemma}
\label{lem:na_sol}
For any given $b\in \Re^{|E|}$, the unique solution $z^* = (z_{i,j})_{(i,j)\in E} \in \Re^{|E|}$ to the linear system $\tilde D_l z = b$ takes the following form:
\begin{equation*}
	z^*_{i,j} = 
	\left\{ 
	\begin{aligned}
		& \sum_{k\in C_i} b_k, \text{ if } i \triangleleft j, \\
		&-\sum_{k\in C_j} b_k, \text{ if } j\triangleleft i, 
	\end{aligned}
	\right.\quad \forall\, (i,j)\in E,
	\end{equation*}
where for any node $i$, $C_i$ consists of $i$ and all its children, i.e., $C_i := \{ j \in V \mid j \triangleleft i \} \cup \{i\}$. 
\end{lemma}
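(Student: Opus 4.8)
The plan is to exploit the invertibility of $\tilde D_l$ that has already been established: since $\tilde D_l z = b$ has a unique solution, it suffices to verify that the proposed vector $z^*$ satisfies $(Dz^*)_v = b_v$ for every node $v \neq l$. I will use the convention of \cite{bertsimas1997introduction} for the node--arc incidence matrix, namely that the column indexed by $(i,j)$ of $D$ has entry $+1$ in the tail row $i$ and $-1$ in the head row $j$, so that the $v$-th equation reads $\sum_{e\text{ leaving }v} z_e - \sum_{e\text{ entering }v} z_e = b_v$; that is, the net outflow at $v$ must equal $b_v$.

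First I would root the tree at the chosen ancestor $l$. Then every node $v \neq l$ has a unique neighbour $p(v)$ lying one step closer to $l$ (its parent) together with a possibly empty set of neighbours lying one step farther from $l$ (its immediate children), and the relation encoded by $\triangleleft$ coincides with the rooted-subtree structure, so that $C_v$ is exactly the subtree rooted at $v$. The guiding observation is that deleting any single edge disconnects $G$ into two pieces, one of which is the subtree $C_w$ rooted at the endpoint $w$ of that edge farther from $l$; the proposed formula simply assigns to the edge the quantity $\sum_{k\in C_w} b_k$, with a sign dictated by the edge's orientation in $E$.

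The core of the argument is a case analysis at a fixed node $v \neq l$, organized according to the edges incident to $v$. The main bookkeeping obstacle is that the orientation of an edge in $E$ need not agree with the parent--child orientation induced by rooting at $l$, so each incident edge must be treated in both of its possible directions. I would check that, in either orientation, the edge joining $v$ to its parent $p(v)$ contributes exactly $+\sum_{k\in C_v} b_k$ to the net outflow at $v$, whereas the edge joining $v$ to an immediate child $c$ contributes exactly $-\sum_{k\in C_c} b_k$; in each instance the two possible sign reversals---one coming from the edge's direction and one from whether $v$ is the tail or the head---cancel to yield the same value.

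Summing the contributions over all edges incident to $v$ then gives
\begin{equation*}
(Dz^*)_v = \sum_{k\in C_v} b_k - \sum_{c} \sum_{k\in C_c} b_k,
\end{equation*}
where $c$ ranges over the immediate children of $v$. Finally I would invoke the disjoint decomposition of the subtree rooted at $v$ into $v$ itself together with the subtrees of its immediate children, $C_v = \{v\} \cup \bigcup_{c} C_c$, which yields $\sum_{k\in C_v} b_k = b_v + \sum_{c}\sum_{k\in C_c} b_k$ and collapses the right-hand side to $b_v$, as required. Combined with the uniqueness guaranteed by the invertibility of $\tilde D_l$, this shows that $z^*$ is the desired solution.
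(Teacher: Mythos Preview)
Your proposal is correct and is precisely the ``direct verification'' that the paper alludes to but does not spell out: the paper's own proof consists of a single sentence stating that the result follows from the structure of the node--arc incidence matrix and can be checked directly. Your case analysis on the orientation of each incident edge and the subtree decomposition $C_v = \{v\}\cup\bigcup_c C_c$ is exactly the computation underlying that sentence, so the approaches coincide.
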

\begin{proof}
This result is a simple consequence of the special structure of the node-arc incidence matrix and can be verified directly. 
\end{proof}

Next, we state the blanket assumption on the loss functions $f_i$, $i\in V$, and derive the KKT conditions associated with problem  \eqref{prob:tree_gnio}.
To express our main ideas clearly, we put strong assumptions on $f_i$, such as strong convexity and differentiability. 
However, as can be observed, these strong assumptions could be removed if more subtle analysis is employed.
\begin{assumption}
\label{assum:exist}
Each $f_i:\Re \to \Re$, $i \in V$ in \eqref{prob:tree_gnio} is differentiable and strongly convex.
\end{assumption}

\noindent From the strong convexity of each $f_i$, we know that the objective function in problem \eqref{prob:tree_gnio} is also strongly convex and therefore level-set bounded.
Moreover, by \cite[Theorems 27.1 and 27.2]{rockafellar1970convex}, problem \eqref{prob:tree_gnio} has a unique solution.
We also note that Assumption \ref{assum:exist} holds in some statistical and machine learning problems \cite{best1990active,condat2013direct,tibshirani2011nearly}.
Under Assumption \ref{assum:exist}, we know from  {\cite{rockafellar1970convex}} that each $f_i^*$ is also a strongly convex differentiable function. Moreover, both $f_i'$ and $(f_i^*)'$ are strictly increasing on $\Re$, and for any given $x,y\in \Re$, $y = f_i'(x)$ if and only if $x = (f_i^*)'(y)$.

Now, we are ready to write down the KKT conditions associated with problem \eqref{prob:tree_gnio}.
For $0 \le \lambda, \mu \le +\infty$, let 
\begin{equation*}
\left\{ 
\begin{aligned}
&h^-_\lambda(x) := \delta(x ~|~ x \ge 0), \quad  \mbox{if} \; \lambda = +\infty, \\
&h^-_\lambda(x) := \begin{cases}
-\lambda x,   &  x<0, \\
0,  & x \ge 0,
\end{cases} \quad \mbox{if} \; 0 \le \lambda < +\infty, 
\end{aligned}
\right. 
\quad {\rm and} \quad 
\left\{ 
\begin{aligned}
&h^+_\mu(x) := \delta(x ~|~ x \le 0), \quad  \mbox{if} \; \mu = +\infty, \\
&h^+_\mu(x) := \begin{cases}
0,   &  x\le 0, \\
\mu x,  & x > 0,
\end{cases} \quad \mbox{if} \; 0 \le \mu < +\infty.
\end{aligned}
\right. 
\end{equation*}
For $(i,j) \in E$, we define $h_{i,j}:\Re \to [0, +\infty]$ by
\begin{equation*}
h_{i,j}(x) := h^-_{\lambda_{i,j}}(x) + h^+_{\mu_{i,j}}(x), \quad \forall~ x\in \Re.
\end{equation*}
Clearly, for each $(i,j) \in E$, $h_{i,j}$ is convex and its subdifferential at $x\in \Re$ takes the following form:
\begin{equation}
\label{eq:subhij}
\partial h_{i,j}(x) =  \begin{cases}
 \{ -\lambda_{i,j} \} ,  & {\rm if ~} x < 0, \\
 [-\lambda_{i,j}, \mu_{i,j} ] , & {\rm if ~} x = 0, \\
 \{ \mu_{i,j} \}           ,     & {\rm if ~} x > 0.
\end{cases}   
\end{equation}
Here, $\partial h_{i,j}(x) = \{+\infty \}$ or $\partial h_{i,j}(x) = \{-\infty \}$ should be understood as $\partial h_{i,j}(x) = \emptyset$. We also adopt the conventions in \eqref{eq:subhij} that $[-\infty, +\infty] = (-\infty, +\infty)$, $[-\infty, \alpha] = (-\infty, \alpha]$, and $[\alpha, +\infty] = [\alpha, +\infty)$ for some $\alpha \in \Re$.

Define $H(z) := \sum_{(i,j) \in E} h_{i,j}(z_{i,j})$ for $z \in \Re^{|E|}$, and $F(x) := \sum_{i \in V} f_i(x_i)$ for $x \in \Re^{|V|}$.
{Let $M = -D^T \in \Re^{|E| \times |V|}$, where $D$ is the node-arc incidence matrix associated with $G$.}
{That is, for $e = (i,j) \in E$, $M(e,i) = -1$ and $M(e,j) = 1$ and all other entries of $M$ are zero.}
Let $H_M(x) :=  H(M x)$ for $x \in \Re^{|V|}$.
Then, it can be easily verified that problem \eqref{prob:tree_gnio} can be equivalently rewritten as 
\begin{equation*}
\min_{x\in \Re^{|V|} }~ F(x) + H_M(x).
\end{equation*}
Then, we have the following lemma on the KKT conditions associated with problem \eqref{prob:tree_gnio}.

\begin{lemma}
\label{lem:optimality}
Problem \eqref{prob:tree_gnio} has a unique minimizer $x^* \in \Re^{|V|}$.
Moreover, $x^*$ solves problem \eqref{prob:tree_gnio} if and only if there exists a unique multiplier $z^* \in \Re^{|E|}$, such that $(x^*,z^*)$ satisfies the following KKT system:
\begin{equation}
\label{eq:kkt_tree}
\begin{aligned}
&\sum_{ k:(i,k)\in E} z^*_{i,k} - \sum_{ k:(k,i) \in E} z^*_{k,i} =  f_i'(x^*_i), \quad \forall ~ i \in V, \\
&\; z^*_{i,j} \in \begin{cases}
 \{ -\lambda_{i,j} \} ,  & {\rm if ~}    x^*_{i} > x^*_{j}, \\
 [-\lambda_{i,j}, \mu_{i,j} ] , & {\rm if ~}  x^*_i = x^*_{j} , \\
 \{ \mu_{i,j} \}           ,     & {\rm if ~}  x^*_i < x^*_j, 
\end{cases} \quad  \forall ~ (i,j) \in E.  
\end{aligned}
\end{equation}
\end{lemma}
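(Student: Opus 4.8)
The plan is to derive the KKT system directly from the first-order optimality condition for the reformulated problem $\min_{x} F(x) + H_M(x)$, using convex subdifferential calculus. Since each $f_i$ is strongly convex and differentiable, $F$ is strongly convex and $F + H_M$ is a proper, lower semicontinuous, strongly convex function; hence, as already noted via \cite[Theorems 27.1 and 27.2]{rockafellar1970convex}, a unique minimizer $x^*$ exists, and $x^*$ is optimal if and only if $0 \in \partial(F + H_M)(x^*)$. This is the relation I would unfold.

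First I would split the subdifferential. Because $F$ is finite and differentiable, the sum rule gives $\partial(F + H_M)(x^*) = \nabla F(x^*) + \partial H_M(x^*)$, where $\nabla F(x^*) = (f_i'(x_i^*))_{i\in V}$. The nontrivial ingredient is the chain rule for the composition $H_M = H \circ M$, namely $\partial H_M(x^*) = M^{T}\partial H(Mx^*)$. Here is where the main obstacle lies: when some $\lambda_{i,j}$ or $\mu_{i,j}$ equal $+\infty$, the corresponding $h_{i,j}$ is an extended-valued indicator, so $H$ is not finite everywhere and a naive chain rule would require a Slater-type relative-interior condition that need not hold. I would resolve this by observing that each $h_{i,j}$ — whether a finite piecewise-linear function or the indicator of a halfline — is polyhedral, so $H$ is a polyhedral convex function. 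For polyhedral $H$, the exact chain rule $\partial(H\circ M)(x^*) = M^{T}\partial H(Mx^*)$ holds as soon as the range of $M$ meets $\operatorname{dom} H$ (the polyhedral case of \cite[Theorem 23.9]{rockafellar1970convex}), and this is immediate since $M\cdot 0 = 0 \in \operatorname{dom} H$ because $h_{i,j}(0)=0$ for every edge. Thus no constraint qualification beyond feasibility is needed.

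Combining the two steps, optimality of $x^*$ is equivalent to the existence of $z^* \in \partial H(Mx^*)$ with $\nabla F(x^*) + M^{T}z^* = 0$, i.e.\ $M^{T}z^* = -\nabla F(x^*)$. It then remains to translate this vector identity into the two lines of \eqref{eq:kkt_tree} componentwise. For the first line I would use the structure $M(e,i)=-1,\ M(e,j)=1$ for $e=(i,j)$, so that the $i$-th entry of $-M^{T}z^*$ is exactly $\sum_{k:(i,k)\in E} z^*_{i,k} - \sum_{k:(k,i)\in E} z^*_{k,i}$, which must equal $f_i'(x_i^*)$. For the second line I would use the separability of $H$, so that $z^* \in \partial H(Mx^*)$ means $z^*_{i,j}\in\partial h_{i,j}((Mx^*)_{i,j})$ for every edge, together with $(Mx^*)_{i,j} = x_j^* - x_i^*$ and the explicit form \eqref{eq:subhij}: the sign conditions $x_i^* > x_j^*$, $x_i^* = x_j^*$, $x_i^* < x_j^*$ correspond respectively to $(Mx^*)_{i,j}$ being negative, zero, positive, and hence to $z^*_{i,j}$ lying in $\{-\lambda_{i,j}\}$, $[-\lambda_{i,j},\mu_{i,j}]$, $\{\mu_{i,j}\}$.

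Finally, for uniqueness of the multiplier $z^*$, I would note that $M^{T}=-D$ and, since $G$ is a tree, ${\rm rank}(D)=|E|$, so $M^{T}$ has full column rank and the linear system $M^{T}z = -\nabla F(x^*)$ admits at most one solution; together with its solvability at the optimum this pins down $z^*$ uniquely. (Equivalently, the invertibility of $\tilde D_l$ from Lemma \ref{lem:na_sol} yields the same conclusion.) I expect the polyhedral chain rule to be the only genuinely delicate point; everything else reduces to bookkeeping with the incidence structure of $M$.
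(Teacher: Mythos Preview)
Your proposal is correct and follows essentially the same route as the paper: split the optimality condition into $\nabla F(x^*)+M^T\partial H(Mx^*)\ni 0$ via \cite[Theorems 23.8 and 23.9]{rockafellar1970convex}, read off the two lines of \eqref{eq:kkt_tree} from the incidence structure of $M$ and \eqref{eq:subhij}, and conclude uniqueness of $z^*$ from ${\rm rank}(D)=|E|$. Your explicit invocation of the polyhedral chain rule to handle the extended-valued $h_{i,j}$ is a welcome refinement that the paper leaves implicit.
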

\begin{proof}
The existence and the uniqueness of the optimal solution to problem \eqref{prob:tree_gnio} follows from the the strong convexity of $F$. Since $F$ is differentiable, we know from \cite[Theorem 23.8]{rockafellar1970convex} that 
\begin{equation*}
 0 \in  F'(x^*) + \partial H_M(x^*).
\end{equation*}
From \cite[Theorem 23.9]{rockafellar1970convex}, it can be seen that $\partial H_M(x^*) = M^T \partial H(Mx^*)$.
Thus, there exists $z^* \in \partial H(Mx^*)$, such that
\begin{equation}
\label{eq:uzstar}
F'(x^*) + M^Tz^* = F'(x^*) - Dz^* = 0.
\end{equation}
Since the $e$-th entry of $Mx^*$ is given by $x^*_{j} - x^*_{i}$, we have from \eqref{eq:subhij} that 
\begin{equation*}
z^*_{i,j} \in \begin{cases}
 \{ -\lambda_{i,j} \} ,  & {\rm if ~}    x^*_{j} - x^*_{i} < 0, \\
 [-\lambda_{i,j}, \mu_{i,j} ] , & {\rm if ~}  x^*_j - x^*_{i} = 0 , \\
 \{ \mu_{i,j} \}           ,     & {\rm if ~}  x^*_j - x^*_i > 0 , 
\end{cases} \quad  \forall ~  (i,j) \in E\red{.}
\end{equation*} 
Thus, we obtain the KKT conditions \eqref{eq:kkt_tree}.
The uniqueness of $z^*$ follows from \eqref{eq:uzstar} and the fact that ${\rm rank}(D) = |E|$.
We thus complete the proof.
\end{proof}

Next, we investigate a naive active set method for solving  problem \eqref{prob:tree_gnio}.
For each edge $(i,j)\in E$, we can associate it with a sign $\# \in \{ <, =, > \}$ to obtain a triple $(i,j,\#)$ representing the relation $x_i \# x_j$. For the consistency, when dealing with edges $(i,j)$ with $\lambda_{i,j} = +\infty$ (or $\mu_{i,j} = +\infty$), the corresponding sign $\#$ can only be chosen from $\{<, =\}$ (or $\{>, =\}$).
We denote by $\cal A$ the collection of all these triples and term it as an {\it active set} associated with problem \eqref{prob:tree_gnio}.
Then, the active set $\cal A$ induces the following $\cal A$-reduced problem from \eqref{prob:tree_gnio}:
\begin{equation}
	\label{prob:tree_gnio_active}
	\begin{array}{cl}
	\min\limits_{x\in\Re^{|V|}} \; & \sum\limits_{i \in V} f_i(x_i) + \sum\limits_{(i,j) \in {\cal A}_{>}} \lambda_{i,j}(x_i - x_{j}) + \sum\limits_{(i,j) \in {\cal A}_{<}} \mu_{i,j} (x_{j} - x_{i}), \\[5mm]
	{\rm s.t. } \; & x_i = x_j, \quad \forall \, (i,j) \in {\cal A}_{=},
	\end{array}
\end{equation}
where ${\cal A}_{\#}:= \{ (i,j) \mid (i,j, \#)\in {\cal A}\}$. 
{If $\mathcal{A}_= = \emptyset$, then \eqref{prob:tree_gnio_active} reduces to an unconstrained optimization problem, which can be efficiently solved since its objective function is separable, smooth and strongly convex}. 
For $i,j \in V$, we say they are \textit{${\cal A}$-connected} if and only if there exists an undirected path in ${\cal A}_{=}$,
{which is obtained by treating all edges in ${\cal A}_{=}$ as undirected edges, that connects $i$ and $j$.
Let $P_{\mathcal{A}}$ be the collection of all ${\cal A}$-connected components of $G$. Then, it is not difficult to observe that $P_{\mathcal{A}}$ is naturally a partition of $G$.
We thus term $P_{\mathcal{A}}$ as the \textit{partition induced by $\mathcal{A}$}.
Without loss of generality, assume $P_{\mathcal{A}} = \{B_k\}_{k=1}^K$ with each $B_k$ being a subtree of $G$, we see that the $\cal A$-reduced problem \eqref{prob:tree_gnio_active} can be decoupled into $K$ independent subproblems as follows:
\begin{equation}
\label{prob:decoupled}
\min_{x\in \Re^{|V_{B_k}|}} \left\{ 
\sum_{i\in V_{B_k}} \hat f_i(x_i)  \mid x_i = x_j, \, \forall \, (i,j)\in E_{B_k}	 \right\}, \quad 1 \le k \le K,
\end{equation}
where for each $i\in V_{B_k}$,
\[
\hat f_i(x_i) := f_i(x_i) + ( \sum_{j:(i,j) \in \mathcal{A}_>} \lambda_{i,j} - \sum_{j:(i,j) \in \mathcal{A}_<} \mu_{i,j} ) x_i + 
( \sum_{l:(l,i) \in \mathcal{A}_<} \mu_{l,i} - \sum_{l:(l,i) \in \mathcal{A}_>} \lambda_{l,i} ) x_i.
\]
Clearly, the simple constraints in problem \eqref{prob:decoupled} can be eliminated. The resulting unconstrained optimization problem has a univariate smooth and strongly convex objective function and thus can be efficiently solved. In this way, we obtain the optimal solution to  the $\cal A$-reduced problem \eqref{prob:tree_gnio_active}.

Unfortunately, there can be up to $3^{|E|}$ different choices for the active set ${\cal A}$. 
Thus, the naive method of exploring all the possible choices of different active sets needs to solve exponential number of  ${\cal A}$-reduced problems. In order to reduce this prohibitive computational costs, we introduce a novel active-set based recursive algorithm in the next section.

\section{An recursive algorithm for solving problem $(\ref{prob:tree_gnio})$}
\label{sec:algorithm}
In this section, we present our recursive algorithm for solving problem \eqref{prob:tree_gnio}. 
We first claim that, without loss of generality, the directed tree $G$ in \eqref{prob:tree_gnio} can be assumed to be an arborescence with the node $1$ to be its root. 
Moreover, we can decompose $G$ into a sequence of subtrees $\{G_m = (V_m, E_m)\}_{m=1}^n$, where $G_1 \subset G_2 \subset \cdots \subset G_n = G$ and $V_m = \{1, 2, \ldots, m \}$ for $1 \le m \le n$, and the set of edges $E_{m+1} \setminus E_m$ contains exactly one edge $(i_m , m+1)$, where $i_m \in V_m$.
Further details are deferred to the Appendix.

For each $1\le m\le n$, problem \eqref{prob:tree_gnio}, when restricted to the the subtree $G_m$, takes the following form: 
\begin{equation}
\label{prob:gnio_gm}
\min_{x\in\Re^{|V_{m}|}} \; \sum_{i \in V_{m} } f_i(x_i) + \sum_{(i,j) \in E_m} \lambda_{{i,j}} (x_i - x_{j})_{+} + \sum_{(i,j) \in E_m} \mu_{i,j} (x_{j} - x_{i})_{+}.
\end{equation}
From Lemma \ref{lem:optimality}, it is not difficult to see that the unique primal-dual optimal pair to problem \eqref{prob:gnio_gm}, denote by $(x^{(m)}, z^{(m)}) \in \Re^{|V_m|} \times \Re^{|E_m|}$, satisfies the following KKT system:
\begin{equation}
\label{eq:reduced}
\begin{aligned}
&\sum_{ k:(i,k)\in E_m} z_{i,k} - \sum_{ k:(k,i) \in E_m} z_{k,i} = f_i'(x_i), \quad \forall ~ i \in V_m, \\
& z_{i,j} \in \begin{cases}
 \{ -\lambda_{i,j} \} ,  & {\rm if ~}  x_{i} > x_{j}, \\
 [-\lambda_{i,j}, \mu_{i,j} ] , & {\rm if ~} x_i = x_{j}, \\
 \{ \mu_{i,j} \}           ,     & {\rm if ~}  x_i < x_{j}, 
\end{cases} \quad  \forall ~ (i,j) \in E_m. 
\end{aligned}
\end{equation}
The unique optimal pair $(x^{(m)},z^{(m)})$ is also referred to as the $G_m$-\textit{optimal pair} for convenience.
By carefully exploiting the special structures in the KKT conditions \eqref{eq:reduced}, we propose to solve problem \eqref{prob:tree_gnio}
in a recursive fashion. Specifically, we will recursively generate the $G_{m+1}$-optimal pair $(x^{m+1},z^{(m+1)})$ from the $G_{m}$-optimal pair $(x^{(m)},z^{(m)})$ for $m=1,\ldots, n-1$.

We summarize the detailed steps of {the above recursive approach} in Algorithm \ref{alg:outer}. 
In the algorithm, the \textit{generate} subroutine is designed to generate the $G_{m+1}$-optimal pair from the $G_m$-optimal pair. 
In the next subsection, we will show that this procedure is accomplished via a novel active-set searching scheme. Hence, it is natural for us to call Algorithm \ref{alg:outer} an active-set based recursive approach (ASRA).

\begin{algorithm}[htbp]
	\caption{ ASRA: An active-set based recursive approach for solving problem $(\ref{prob:tree_gnio})$  }
	\label{alg:outer}
	\begin{algorithmic}[1]
		\State {{\bf Initialize:} $x^{(1)}_1 = (f_1^*)'(0) \in \Re$, and $z^{(1)} = \emptyset$} 
		\For {$m = 1,\ldots,n-1$}
		\State { $(x^{(m+1)} ,z^{(m+1)} ) = {\it generate}( x^{(m)}, z^{(m)} , G_{m+1}  )$  }	
		\EndFor
       \State {{\bf Return:} $(x^{(n)},z^{(n)}) \in \Re^{n} \times \Re^{n-1}$ }
	\end{algorithmic}
\end{algorithm}

\subsection{The \textit{generate} subroutine}
To efficiently obtain the $G_{m+1}$-optimal pair {from the given $G_m$-optimal pair}, we shall investigated the KKT conditions associated with the subproblem induced by the subtree $G_{m+1}$. Specially, it takes the following form:
\begin{align}
\label{align:vm}
&\sum_{ k:(i,k)\in E_{m}} z_{i,k} - \sum_{ k:(k,i) \in E_{m}} z_{k,i} = f_i'(x_i), \quad \forall ~ i \in V_m \backslash \{ i_m \},\\
\label{align:em}
& z_{i,j} \in \begin{cases}
 \{ -\lambda_{i,j} \} ,  & {\rm if ~} x_{i} > x_{j}, \\
 [-\lambda_{i,j}, \mu_{i,j} ] , & {\rm if ~} x_i =x_{j}, \\
 \{ \mu_{i,j} \}           ,     & {\rm if ~} x_i <x_{j}, 
\end{cases} \quad  \forall ~ (i,j) \in E_m, \\
\label{align:im}
&\sum_{ k:(i_m,k) \in E_{m} } z_{i_m,k} -  \sum_{ k:(k,i_m) \in E_{m}} z_{k,i_m} +  z_{i_m, m+1} = f'_{i_m}(x_{i_m}), \\
\label{align:m+1}
&- z_{i_m, m+1} = f'_{m+1}(x_{m+1}), \\
\label{align:em+1}
& z_{i_m,m+1 } \in \begin{cases}
 \{ -\lambda_{i_m,m+1 } \} ,  & {\rm if ~} x_{i_m} > x_{m+1}, \\
 [-\lambda_{i_m,m+1 }, \mu_{i,j} ] , & {\rm if ~} x_{i_m} = x_{m+1}, \\
 \{ \mu_{i_m,m+1 } \}           ,     & {\rm if ~}  x_{i_m} < x_{m+1}. 
\end{cases}
\end{align}
As one can observe, instead of writing the KKT conditions as a whole set of equations, we have singled out those, namely \eqref{align:im}, \eqref{align:m+1} and \eqref{align:em+1}, associated with the dual variable $z_{i_m, m+1}$, which corresponds to the newly added edge $ \{(i_m, m+1)\} = E_{m+1}\setminus E_{m}$. 
Based on the above KKT conditions, we have the following proposition regarding the sign of $z_{i_m, m+1}$.
\begin{proposition}
\label{prop:direction}
It holds that $z^{(m+1)}_{i_m, m+1}  f'_{m+1}( x^{(m)}_{i_m} ) \le 0$, where  $(x^{(m)}, z^{(m)})$  and $(x^{(m+1)},z^{(m+1)})$ are the $G_m$-optimal pair and the $G_{m+1}$-optimal pair, respectively.  
\end{proposition}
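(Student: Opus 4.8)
The plan is to eliminate the newly added variable $x_{m+1}$ by partial minimization, reduce the $G_{m+1}$ subproblem to a single-coordinate perturbation of the $G_m$ subproblem, and then compare the two minimizers via monotonicity of the subdifferential. Write $\Phi_m$ for the objective of \eqref{prob:gnio_gm}, and note that $x_{m+1}$ couples into this objective only through $f_{m+1}(x_{m+1}) + h_{i_m,m+1}(x_{m+1}-x_{i_m})$. I would therefore set $g(a) := \min_{b\in\Re}\{ f_{m+1}(b) + h_{i_m,m+1}(b-a)\}$ and observe that the $G_{m+1}$ subproblem is equivalent to $\min_{x\in\Re^{|V_m|}} \Phi_m(x) + g(x_{i_m})$. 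Denoting by $\hat x$ its minimizer (the restriction of $x^{(m+1)}$ to the first $m$ coordinates) and by $b^*(a)$ the unique inner minimizer, standard partial-minimization facts give $x^{(m+1)}_{m+1} = b^*(\hat x_{i_m})$, convexity and differentiability of $g$, and — from the inner first-order condition $f'_{m+1}(b^*(a)) \in -\partial h_{i_m,m+1}(b^*(a)-a)$ together with \eqref{eq:subhij} — the envelope identity $g'(a) = f'_{m+1}(b^*(a))$. Combined with \eqref{align:m+1} this yields the key relation $z^{(m+1)}_{i_m,m+1} = -f'_{m+1}(x^{(m+1)}_{m+1}) = -g'(\hat x_{i_m})$, so the claim is equivalent to showing $g'(\hat x_{i_m})\,f'_{m+1}(x^{(m)}_{i_m}) \ge 0$.

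Two facts then drive the conclusion. Let $\bar x_{m+1} := (f^*_{m+1})'(0)$ be the unconstrained minimizer of $f_{m+1}$, so $f'_{m+1}(\bar x_{m+1}) = 0$. A short sandwiching argument — showing that $b^*(a)$ lies between $\bar x_{m+1}$ and $a$ using the monotonicity of $f'_{m+1}$ and the sign pattern \eqref{eq:subhij} of $\partial h_{i_m,m+1}$ — gives the sign property that $g'(a) \ge 0$ whenever $a \ge \bar x_{m+1}$ and $g'(a) \le 0$ whenever $a \le \bar x_{m+1}$. The same reasoning covers the constraint regimes $\lambda_{i_m,m+1}=+\infty$ or $\mu_{i_m,m+1}=+\infty$, with $h_{i_m,m+1}$ read as an indicator as in the conventions of \eqref{eq:subhij}.

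For the second fact, since $x^{(m)}$ minimizes the convex function $\Phi_m$ and $\hat x$ minimizes $\Phi_m + g(\cdot_{i_m})$, the optimality conditions read $0\in\partial\Phi_m(x^{(m)})$ and $-g'(\hat x_{i_m})e_{i_m}\in\partial\Phi_m(\hat x)$. Invoking monotonicity of the operator $\partial\Phi_m$ at these two points then produces the variational inequality $g'(\hat x_{i_m})\,(\hat x_{i_m}-x^{(m)}_{i_m}) \le 0$. Combining the two facts finishes the argument: if $g'(\hat x_{i_m})=0$ the target product is zero; if $g'(\hat x_{i_m})>0$, the sign property forces $\hat x_{i_m}>\bar x_{m+1}$ while the variational inequality forces $x^{(m)}_{i_m}\ge \hat x_{i_m}$, whence $x^{(m)}_{i_m}>\bar x_{m+1}$ and $f'_{m+1}(x^{(m)}_{i_m})>0$; the case $g'(\hat x_{i_m})<0$ is symmetric. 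In every case $g'(\hat x_{i_m})\,f'_{m+1}(x^{(m)}_{i_m}) \ge 0$, i.e. $z^{(m+1)}_{i_m,m+1}\,f'_{m+1}(x^{(m)}_{i_m}) \le 0$.

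The step I expect to be the main obstacle is the bookkeeping around $g$: establishing its differentiability together with the exact envelope formula $g'(a)=f'_{m+1}(b^*(a))$ and the sign property, uniformly across both the finite and the $+\infty$ (hard-constraint) regimes, where $h_{i_m,m+1}$ and its subdifferential must be manipulated through the conventions in \eqref{eq:subhij}. By contrast, the comparative-statics step is routine once monotonicity of $\partial\Phi_m$ is available, and the final case analysis is immediate.
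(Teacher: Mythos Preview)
Your argument is correct, and it takes a genuinely different route from the paper's own proof. The paper argues by contradiction: assuming, say, $f'_{m+1}(x^{(m)}_{i_m})>0$ and $z^{(m+1)}_{i_m,m+1}>0$, it reads off from the KKT relations \eqref{align:m+1}--\eqref{align:em+1} that $x^{(m)}_{i_m}>x^{(m+1)}_{i_m}$, then observes that the restriction $\widetilde x$ of $x^{(m+1)}$ to $V_m$ minimizes $F_1(x)=\Phi_m(x)-z^{(m+1)}_{i_m,m+1}x_{i_m}$ while $x^{(m)}$ minimizes $F_0=\Phi_m$, and a direct comparison $F_1(\widetilde x)\le F_1(x^{(m)})$ together with $F_0(\widetilde x)\ge F_0(x^{(m)})$ forces $x^{(m)}_{i_m}\le\widetilde x_{i_m}$, a contradiction.

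Your version is a direct (not by contradiction) argument built around the envelope function $g(a)=\min_b\{f_{m+1}(b)+h_{i_m,m+1}(b-a)\}=(f_{m+1}\,\square\,\check h_{i_m,m+1})(a)$. The two proofs share the same core comparison---the paper's inequality $F_1(\widetilde x)-F_1(x^{(m)})\le 0$ expanded out is exactly the function-value counterpart of your monotone-operator inequality $\langle -g'(\hat x_{i_m})e_{i_m}-0,\hat x-x^{(m)}\rangle\ge 0$---but you package the effect of the new node entirely into $g$ and then invoke monotonicity of $\partial\Phi_m$, whereas the paper works at the level of explicit KKT conditions and function values. Your approach is more structural and makes the mechanism transparent (the sign of $z^{(m+1)}_{i_m,m+1}$ is literally $-g'(\hat x_{i_m})$, and the sandwich $\bar x_{m+1}\le b^*(a)\le a$ explains the sign pattern); the paper's is more elementary and avoids having to justify the infimal-convolution bookkeeping. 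On that bookkeeping: your flagged ``main obstacle'' is fine---because $f_{m+1}$ is strongly convex the inner minimizer $b^*(a)$ is unique, so $\partial g(a)=\{f'_{m+1}(b^*(a))\}$ even in the indicator regimes, and the sandwich argument goes through verbatim there as well.
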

\begin{proof}
Note that when $f'_{m+1}( x^{(m)}_{i_m} ) = 0$, the desired result naturally holds.
For the remaining parts, we only prove the case where $f'_{m+1}( x^{(m)}_{i_m} ) > 0$, since the proof for the case with $f'_{m+1}( x^{(m)}_{i_m} ) < 0$ can be easily modified from the arguments here.

Suppose that $f'_{m+1}( x^{(m)}_{i_m} ) > 0$, then we shall prove that $z^{(m+1)}_{i_m, m+1} \le 0$. 
Assume on the contrary that $z^{(m+1)}_{i_m, m+1} > 0$. Then, from \eqref{align:m+1}, we have $x^{(m+1)}_{m+1} = (f_{m+1}^{*})'(- z^{(m+1)}_{i_m,m+1}) < (f_{m+1}^{*})'(0)$.
Moreover, \eqref{align:em+1} implies that $x^{(m+1)}_{m+1} \ge x^{(m+1)}_{i_m}$. 
Thus, we have from the strict monotonicity of $(f_{m+1}^*)'$ the following inequality:
\begin{equation}
\label{eq:im_relations}
x^{(m)}_{i_m} > (f_{m+1}^{*})'(0) > {x}^{(m+1)}_{m+1} \ge {x}^{(m+1)}_{i_m}.
\end{equation}

Now, from \eqref{align:vm}, \eqref{align:em}, and \eqref{align:im}, we see that $\widetilde{x} \in \Re^{|V_m|}$ with $\widetilde{x}_i = {x}^{(m+1)}_i $ for $i\in V_m$ is the optimal solution to the following optimization problem:
\begin{equation*}
\min_{x\in\Re^{|V_m|}} \; F_1(x) := \sum_{i \in V_m} f_i(x_i) + \sum_{(i,j) \in E_m} \left\{ \lambda_{{i,j}} (x_i - x_{j})_{+} + \mu_{i,j} (x_{j} - x_{i})_{+} \right\} - z^{(m+1)}_{i_m, m+1} x_{i_m}.
\end{equation*}
Meanwhile, since $(x^{(m)},z^{(m)})$ is the $G_m$-optimal pair, $x^{(m)}$ is the optimal solution to the following optimization problem:
\begin{equation*}
\min_{x\in\Re^{|V_m|}} \; F_0(x) := \sum_{i \in V_m} f_i(x_i) + \sum_{(i,j) \in E_m}  \left\{ \lambda_{{i,j}} (x_i - x_{j})_{+} + \mu_{i,j} (x_{j} - x_{i})_{+} \right\} .
\end{equation*}
Then, it holds that 
\begin{equation*}
0 \ge{} F_1(\widetilde{x}) - F_1(x^{(m)}) 
= {}  F_0(\widetilde{x}) - F_0(x^{(m)}) + {z}^{(m+1)}_{i_m,m+1}( x^{(m)}_{i_m} - \widetilde{x}_{i_m}).
\end{equation*}
Since $F_0(\widetilde{x}) - F_0(x^{(m)}) \ge 0$, ${z}^{(m+1)}_{i_m,m+1}>0$, and $ \widetilde{x}_{i_m}  = {x}^{(m+1)}_{i_m}$, we have $ x^{(m)}_{i_m}  - {x}^{(m+1)}_{i_m}\le 0$, which contradicts to \eqref{eq:im_relations}.
Thus, we have $z^{(m+1)}_{i_m, m+1} \le 0$ and $z^{(m+1)}_{i_m, m+1} f'_{m+1}(x^{(m)}_{i_m}) \le 0$,  and complete the proof.
\end{proof}

From Proposition \ref{prop:direction}, we can determine the sign of $z^{(m+1)}_{i_m,m+1}$ by the value of $f'_{m+1}(x_{i_m}^{(m)})$. 
Moreover, if $f'_{m+1}(x_{i_m}^{(m)}) = 0$, we can easily construct the $G_{m+1}$-optimal pair as follows:
\begin{equation*}
x_{i}^{(m+1)} = \left\{ 
\begin{aligned}
	& x_{i}^{(m)}, \, \forall\, i\in V_m, \\
	& { x_{i_m}^{(m)}} , \, i = m+1,
\end{aligned}	\right.
 \mbox{ and }\, 
z_{i,j}^{(m+1)} = 
	\left\{ 
	\begin{aligned}
	& z_{i,j}^{(m)}, \, \forall\, (i,j)\in E_{m}, \\
	& 0, \, (i,j) = (i_m, m+1).
	\end{aligned}
	\right.
\end{equation*}
Hence, we focus on the case with $f'_{m+1}(x_{i_m}^{(m)}) \neq 0$ in the subsequent discussions.  For this purpose, we consider the following parametric optimization problem with the parameter $t \in \Re$:
\begin{equation}
\label{prob:gm}
\min_{x\in\Re^{|V_{m+1}|}} \sum_{i \in V_{m+1} } f_i(x_i) + \sum_{(i,j) \in E_m} \{ \lambda_{{i,j}} (x_i - x_{j})_{+} + \mu_{i,j} (x_{j} - x_{i})_{+} \} - t( x_{i_m} - x_{m+1} ),
\end{equation}
whose KKT conditions are presented below:
\begin{equation}
\begin{aligned}
\label{eq:kkt_gm}
&\sum_{ k:(i,k)\in E_{m}} z_{i,k} - \sum_{ k:(k,i) \in E_{m}} z_{k,i} + { 1_{ \{i = i_m\}} } t = f_i'(x_i), \quad \forall ~ i \in V_m,\\
& z_{i,j} \in \begin{cases}
 \{ -\lambda_{i,j} \} ,  & {\rm if ~}  x_{i} > x_{j}, \\
 [-\lambda_{i,j}, \mu_{i,j} ] , & {\rm if ~}x_i = x_j, \\
 \{ \mu_{i,j} \}           ,     & {\rm if ~}  x_i < x_j, 
\end{cases} \quad  \forall ~ (i,j) \in E_m, \\
&- t = f'_{m+1}(x_{m+1}). \\
\end{aligned}
\end{equation}
Since each $f_i$ is strongly convex, problem \eqref{prob:gm} has a unique optimal solution, denoted by $x^*(t)$,  for each $t\in \Re$. 
Moreover, using the Fenchel-Rockafellar duality theorem \cite{rockafellar1970convex} and the differentiability of each $f_i$, we know that there exists a unique dual optimal solution to problem \eqref{prob:gm}, denoted by $z^*(t)$, which together with $x^*(t)$ satisfies the KKT conditions \eqref{eq:kkt_gm}.
If for certain $t^* \in \Re$, it holds that
\begin{equation}
\label{eq:t_star}
t^* \in  \begin{cases}
 \{ -\lambda_{i_m,m+1 } \} ,  & {\rm if ~}  x^*_{i_m}(t^*) > x^*_{m+1}(t^*), \\
 [-\lambda_{i_m,m+1 }, \mu_{i_m,m+1} ] , & {\rm if ~} x^*_{i_m}(t^*) = x^*_{m+1}(t^*), \\
 \{ \mu_{i_m,m+1 } \}           ,     & {\rm if ~} x^*_{i_m}(t^*) <x^*_{m+1}(t^*).
\end{cases}
\end{equation}
Then, by comparing the equations \eqref{eq:kkt_gm} and \eqref{eq:t_star} and the KKT conditions in equations \eqref{align:vm} to \eqref{align:em+1}, we can obtain the $G_{m+1}$-optimal pair based on $(x^*(t^*),z^*(t^*))$. 
Indeed, the $G_{m+1}$-optimal pair $(x^{(m+1)}, z^{(m+1)})$ can be constructed via \[x^{(m+1)} = x^*(t^*), \mbox{ and }  z^{(m+1)}_{i,j} = z^*_{i,j}(t^*) \mbox{ for } (i,j) \in E_m \red{,} \mbox { and } z^{(m+1)}_{i_m,m+1} = t^*.\]
This observation also indicates that one can determine the sign of $t^*$ using Proposition \ref{prop:direction}.

To find the desired $t^*$, we start from the initial guess $t_0 = 0$. 
We note that when $t_0 =0$, the corresponding primal-dual optimal pair $( x^*(t_0), z^*(t_0) )$ is readily known with $x_i^*(t_0) = x_i^{(m)}$ for $i \in V_m$ and $x_{m+1}^*(t_0) = (f_{m+1}^*)'(-t_0)$, and $z^*(t_0) = z^{(m)}$. 
Then, we can easily check if $t_0=0$ satisfies \eqref{eq:t_star} by comparing $x_{m+1}^*(t_0)$ and $x_{i_m}^*(t_0)$. 
If $x_{m+1}^*(t_0) \neq x_{i_m}^*(t_0)$, we can use Proposition \ref{prop:direction} to determine if $t$ should be decreased or increased. 
Assume without loss of the generality that $f'_{m+1}(x_{i_m}^*(t_0)) = f'_{m+1}(x_{i_m}^{(m)}) > 0$.  
From the above discussions and Proposition \ref{prop:direction}, we see that $t^* < 0$. 
Then, we rely on an active-set strategy to iteratively update our guess of $t^*$.

Starting from the initial guess $t_0 = 0$, we denote the active set corresponding to $E_m$ in \eqref{prob:gm} by 
\begin{equation}
	\label{eq:A0}
	\mathcal{A}^0 = \{ (i,j, \#) \mid (i,j) \in E_m, \; x_i^*(t_0) \, \# \, x_j^*(t_0) \}, \, \mbox{ where } \# \in \{<, = , > \}.
\end{equation}
Then, we add the equality constraints induced by edges in $\mathcal{A}^0_{=}$ to problem \eqref{prob:gm} and obtain the $\mathcal{A}^0$-{reduced problem} of problem \eqref{prob:gm}.
The key observation is that the primal-dual optimal solution pair to the $\mathcal{A}^0$-reduced problem can be written in a semi-closed form as functions of the parameter $t$, denoted by $(x^{0}(t), z^{0}(t))$.
Then, we construct a dual candidate $\tilde{z}^{0}(t)$ to problem \eqref{prob:gm} as follows:
\begin{equation*}
	\tilde{z}_{i,j}^{0}(t) = \left\{ 
	\begin{aligned}
	& z_{i,j}^{0}(t), \, \mbox{ if }(i,j)\in \mathcal{A}^0_{=}, \\[2pt]
	& z_{i,j}^*(t_0), \, \mbox{ otherwise},
	\end{aligned}
	\right.\quad \forall \, (i,j)\in E_m.
\end{equation*}
We will show that if $(x^{0}(t), \tilde{z}^{0}(t))$ satisfies the complementarity conditions in \eqref{eq:kkt_gm}, i.e., 
\[
	\tilde{z}^{0}_{i,j}(t) \in \begin{cases}
		\{ -\lambda_{i,j} \} ,  & {\rm if ~} x^{0}_{i}(t) > x^{0}_{j}(t), \\
		[-\lambda_{i,j}, \mu_{i,j} ] , & {\rm if ~}  x^{0}_{i}(t) = x^{0}_j(t), \\
		\{ \mu_{i,j} \}           ,     & {\rm if ~}  x_i^{0}(t)< x_j^{0}(t), 
	   \end{cases} \quad  \forall ~ (i,j) \in E_m,
\]
then $(x^{0}(t), \tilde z^{0}(t))$ is the primal-dual optimal solution pair to problem \eqref{prob:gm}.

Based on this observation, a new guess of $t^*$ is constructed by searching for the smallest possible $t_1$ such that $-\lambda_{i_m,m+1} \le t^* \le t_1 \le t_0 = 0$ and  $( x^{0}(t_1), \tilde z^{0}(t_1))$ still satisfies the above complementarity conditions. 
Then, we have $(x^*(t_1), z^*(t_1)) = ( x^{0}(t_1), \tilde z^{0}(t_1))$ and we can check if $t_1$ satisfies the system \eqref{eq:t_star}. 
If not, then a new active set  ${\cal A}^1$ is constructed and the above process continues until $t^*$ is found. 
In a nutshell, our approach is summarized in the following flowchart:
\begin{equation*}
(t_0 , x^*(t_0), z^*(t_0), \mathcal{A}^0 )\Rightarrow \cdots \Rightarrow (t_q, x^*(t_q), z^*(t_q), \mathcal{A}^q )  \Rightarrow \cdots \Rightarrow (t^*, x^*(t^*), z^*(t^*), \mathcal{A}^*).
\end{equation*}
In what follows, we shall discuss the detailed steps of our procedure and we will prove that the search process of $t^*$ terminates in at most $2m-1$ steps.

At $t_q$ with $t^* < t_q\le t_0$, we assume that {$(x^*(t_q)$, $z^*(t_q))$}, and the corresponding active set ${\cal A}^q$ are available. Then,
we construct the following $\mathcal{A}^q$-reduced parametric optimization problem with parameter $t\in \Re$:
\begin{equation}\label{prob:gm_active}
	\begin{array}{rl}
		\displaystyle \min_{x\in\Re^{|V_{m+1}|}} \; &  \displaystyle  \sum_{i \in V_{m+1}} f_i(x_i) + \displaystyle  \sum_{(i,j) \in \mathcal{A}^{q}_{>}} \lambda_{{i,j}} (x_i - x_{j}) + \displaystyle  \sum_{(i,j) \in \mathcal{A}^{q}_{<}} \mu_{i,j} (x_{j} - x_{i}) - t (x_{i_m} - x_{m+1}), \\[7mm]
		{\rm s.t. } \;  & x_i = x_j, \quad \forall \, (i,j) \in \mathcal{A}^{q}_{=},	
	\end{array}
\end{equation}
whose unique primal-dual optimal pair is denoted by $(x^{q}(t), z^{q}(t))$. 
{If $\mathcal{A}^{q}_{=} = \emptyset$, then we set $z^q(t) = \emptyset$.}
Here, we require the following compatibility conditions between ${\cal A}^q$ and $(x^*(t_q),z^*(t_q))$, which also servers as an induction hypothesis.
\begin{assumption}
\label{assump:tq}
The active set ${\cal A}^q$ and the primal-dual pair $(x^*(t_q),z^*(t_q))$ are compatible. 
That is, $x^*(t_q)$ is the optimal solution to the problem \eqref{prob:gm_active} at $t = t_q$, i.e., $x^q(t_q) = x^*(t_q)$ and the corresponding dual optimal solution $z^q(t_q)$ can be constructed via $z^q_{i,j}(t_q) = z_{i,j}^*(t_q)$ for $(i,j) \in \mathcal{A}^q_{=}$. 
Moreover, it holds that $x_{i_m}^*(t_q) - x^*_{m+1}(t_q) > 0$.
\end{assumption}
\noindent We shall emphasize that according to the construction of ${\cal A}^0$, it is not difficult to observe that the active set ${\cal A}^0$ and the primal-dual pair $(x^*(t_0),z^*(t_0))$ are compatible, and $x_{i_m}^*(t_0) - x^*_{m+1}(t_0) > 0$.
Next, we focus on obtaining $(t_{q+1}, x^*(t_{q+1}), z^*(t_{q+1}), \mathcal{A}^{q+1})$ from  $(t_{q}, x^*(t_{q}), z^*(t_{q}), \mathcal{A}^q )$.

We start by investigating the optimal primal-dual solution pair corresponding to problem \eqref{prob:gm_active}. Particularly, instead of solving problem \eqref{prob:gm_active} for each $t\neq t_q$, we derive in the following proposition the semi-closed formulas for  $(x^{q}(t), z^{q}(t))$ under Assumption \ref{assump:tq}. 
We also show that the optimal primal-dual solution pair of problem \eqref{prob:gm} can be obtained from  $(x^{q}(t), z^{q}(t))$ provided that some complementarity  conditions hold.

\begin{proposition}
\label{prop:closedform} 
Let $P_{ \mathcal{A}^q }$ be the partition of $G_m$ induced by $\mathcal{A}^q$ and $B^q \in P_{ \mathcal{A}^q  }$ be the subtree such that $i_m\in B^q$. 
Then, under Assumption \ref{assump:tq}, for any $t \in \Re$, the primal optimal solution $x^{q}(t)$ takes the following form:
\begin{equation}
\label{eq:update_x}
\left\{ 
\begin{aligned}
&x^{q}_i(t)	= x_i^*(t_q), \quad \forall\, i \in V_m \backslash V_{B^q}, \\[2mm]
&x^{q}_i (t) = \big((\sum_{i \in V_{B^q}} f_i)^{*}\big)' \bigg( t + \beta^{q} \bigg), \quad \forall ~ i \in V_{B^q}, \\[2mm]
&x^{q}_{m+1}(t)  = ( f^*_{m+1})' ( -t ), \\ 
\end{aligned}
\right.
\end{equation}
where
\[
\beta^q = \sum_{(i,k) \in E_m\atop i\in V_{B^q}, k \notin V_{B^q}} z^*_{i,k}(t_q) - \sum_{(k,i) \in E_m \atop k\notin V_{B^q}, i \in V_{B^q}} z^*_{k,i}(t_q).
\]
Pick $i_m$ as the ancestor of $B^q$. Then, for any $t\in \Re$, $z^{q}(t)$ is given by 
\begin{equation}
\label{eq:update_z}
\left\{ 
\begin{aligned}
&z_{i,j}^{q}(t) = z_{i,j}^*(t_q), \quad \forall (i,j)\in {\cal A}^q_{=} \backslash E_{B^q}, \\[2mm]
&z_{i,j}^{q}(t) = 
\left\{ 
\begin{aligned}
&\sum_{l \in C_i} f'_l(x_l^{q}(t))  - \alpha^q_{i,j} , \, \mbox{ if } i \triangleleft j, \\[2mm]
		& \sum_{l \in C_j} - f'_l(x_l^{q}(t))  + \alpha^q_{i,j}, \, \mbox{ if } i \triangleleft j, 
	\end{aligned}
\quad \forall (i,j) \in E_{B^q},
	\right.
	\end{aligned}
	\right.
\end{equation}
where $C_i := \{j\in V_{B^q}\mid j \triangleleft i \} \cup \{ i \}$ for any $i\in V_{B^q}$, and 
\[
\alpha^q_{i,j} =\left\{ 
\begin{aligned}
& \sum_{(l,k)\in E_m \atop l\in C_i, k\notin V_{B^q}} z_{l,k}^*(t_q) - \sum_{(k,l)\in E_m \atop l\in C_i, k\notin V_{B^q}} z_{k,l}^*(t_q) , \, \mbox{ if } i \triangleleft j, \\[2mm]
& \sum_{(l,k)\in E_m \atop l\in C_j, k\notin V_{B^q}} z_{l,k}^*(t_q) - \sum_{(k,l)\in E_m \atop l\in C_j, k\notin V_{B^q}} z_{k,l}^*(t_q) , \, \mbox{ if } j \triangleleft i, 
	\end{aligned}
\quad \forall (i,j) \in E_{B^q}.
	\right.
\]

Let $\Omega^q = \{ (i,j) \in E_m \backslash E_{B^q} \mid \mbox{exactly one of } i \mbox{ and } j \mbox{ is in } V_{B^q}\}$.
If
\begin{align}
\label{eq:cs_EB}
& z_{i,j}^{q}(t) \in [-\lambda_{i,j}, \mu_{i,j}], \quad \forall\, (i,j)\in E_{B^q}, \\
\label{eq:zstar_xatq}
& z^*_{i,j}(t_q) \in \begin{cases}
	\{ -\lambda_{i,j} \} ,  & {\rm if ~} x^{q}_{i}(t) > x^{q}_{j}(t), \\
	[-\lambda_{i,j}, \mu_{i,j} ] , & {\rm if ~}  x^{q}_{i}(t) = x^{q}_j(t), \\
	\{ \mu_{i,j} \}           ,     & {\rm if ~}  x_i^{q}(t)< x_j^{q}(t), 
\end{cases} 
\quad \forall \,(i,j)\in \Omega^q,
\end{align}
then $(x^{q}(t), \tilde z^{q}(t))$ solves the KKT system \eqref{eq:kkt_gm}, where \begin{equation}
	\label{eq:tildez_1}
	\tilde{z}_{i,j}^{q}(t) = \left\{ 
	\begin{aligned}
		& z_{i,j}^{q}(t), \, \mbox{ if }(i,j)\in {\cal A}^q_=, \\[2pt]
		& z_{i,j}^*(t_q), \, \mbox{ otherwise},
	\end{aligned}
	\right.\quad \forall \, (i,j)\in E_m.
\end{equation}  
\end{proposition}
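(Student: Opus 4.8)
The plan is to exploit the fact that the equality constraints in the $\mathcal{A}^q$-reduced problem \eqref{prob:gm_active} force $x$ to be constant on each block of the partition $P_{\mathcal{A}^q}$. Substituting a single scalar $y_k$ for all $x_i$ with $i\in V_{B_k}$, the penalty terms on inter-block edges become linear in the $y_k$ and are therefore separable, so the minimization splits into independent univariate strongly convex problems --- one per block of $P_{\mathcal{A}^q}$, together with the free variable $x_{m+1}$. First I would read off the stationarity condition of each piece. For $x_{m+1}$ this gives $f'_{m+1}(x^q_{m+1}(t))=-t$, i.e.\ $x^q_{m+1}(t)=(f^*_{m+1})'(-t)$. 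For a block $B_k$ not containing $i_m$ the stationarity condition carries no $t$-dependence, so $x^q_i(t)$ is constant in $t$ and equals its value $x^*_i(t_q)$ supplied by Assumption \ref{assump:tq}.

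The crux is the block $B^q$ containing $i_m$, where stationarity reads $\sum_{i\in V_{B^q}} f'_i(y) + c = 0$ with $c$ collecting the $-t$ term together with the coefficients of the inter-block penalties $\lambda,\mu$ incident to $B^q$. The key step is to rewrite $c$ using the complementarity satisfied by $(x^*(t_q),z^*(t_q))$ at $t=t_q$: on every inter-block edge in $\mathcal{A}^q_{>}$ one has $z^*_{i,j}(t_q)=-\lambda_{i,j}$, and on every inter-block edge in $\mathcal{A}^q_{<}$ one has $z^*_{i,j}(t_q)=\mu_{i,j}$. A careful sign accounting then collapses all penalty coefficients into the net boundary flow $\beta^q$, yielding $\sum_{i\in V_{B^q}} f'_i(y)=t+\beta^q$ and hence $x^q_i(t)=((\sum_{i\in V_{B^q}}f_i)^*)'(t+\beta^q)$. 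This establishes \eqref{eq:update_x}.

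For the dual formula \eqref{eq:update_z}, I would impose the node-balance equations of \eqref{eq:kkt_gm} on the nodes of $V_{B^q}$, treating the inter-block duals as the fixed constants $z^*_{i,j}(t_q)$. Moving these boundary terms to the right-hand side leaves the node--arc incidence system of $B^q$ in the within-block duals $(z^q_{i,j})_{(i,j)\in E_{B^q}}$, whose right-hand side is $b_i = f'_i(x^q_i(t)) - \gamma_i - 1_{\{i=i_m\}}t$, where $\gamma_i$ is the net flow of the fixed boundary duals at node $i$. Consistency follows by summing over $V_{B^q}$: the left side telescopes to zero, while the right side is $(t+\beta^q)-\beta^q-t=0$ using $\sum_{i\in V_{B^q}}\gamma_i=\beta^q$ and the primal formula just derived. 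Choosing $i_m$ as the root and applying Lemma \ref{lem:na_sol} then delivers the closed form, with the partial sums $\sum_{k\in C_i}\gamma_k$ assembling exactly into $\alpha^q_{i,j}$.

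Finally, I would verify that $(x^q(t),\tilde z^q(t))$ solves \eqref{eq:kkt_gm}. Node-balance holds on $V_{B^q}$ by the very construction of $z^q(t)$, and on $V_m\setminus V_{B^q}$ because there both the primal values and all incident components of $\tilde z^q(t)$ coincide with those of $(x^*(t_q),z^*(t_q))$, which already satisfies \eqref{eq:kkt_gm} at $t_q$; the relation $-t=f'_{m+1}(x_{m+1})$ is immediate. For complementarity, edges inside $B^q$ are covered by hypothesis \eqref{eq:cs_EB} (their endpoints share the common value $y$), boundary edges of $B^q$ by hypothesis \eqref{eq:zstar_xatq}, and all remaining edges inherit complementarity from $t_q$ since their endpoints' primal values are unchanged. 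I expect the main obstacle to be the sign bookkeeping in the second and third steps --- correctly tracking edge orientation, the child relation $\triangleleft$, and the $\mathcal{A}^q_{>}$ versus $\mathcal{A}^q_{<}$ distinction so that the penalty coefficients condense precisely into $\beta^q$ and $\alpha^q_{i,j}$.
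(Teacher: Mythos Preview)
Your proposal is correct and follows essentially the same route as the paper's proof: decompose \eqref{prob:gm_active} block-by-block along $P_{\mathcal{A}^q}$, read off $x^q_{m+1}(t)$ and the $t$-independent blocks directly, sum the stationarity conditions over $V_{B^q}$ to obtain the primal formula on $B^q$, and then invoke Lemma~\ref{lem:na_sol} on the node--arc system of $B^q$ for the dual. Your explicit justification for replacing the penalty coefficients $\lambda_{i,j},\mu_{i,j}$ on boundary edges by $z^*_{i,j}(t_q)$ (via complementarity at $t_q$) is actually a point the paper glosses over---it simply writes the $B^q$-subproblem \eqref{prob:subproblem_B} with $z^*(t_q)$ in place of $\lambda,\mu$ without comment---so your account is, if anything, slightly more complete here.
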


\begin{proof}
Without loss of generality, we can assume that  $P_{ \mathcal{A}^q } = \{B_k\}_{k=1}^K \cup B^{q}$ where $B_k$, $1\le k \le K$, and $B^q$ are subtrees of $G_{m}$. Then, problem \eqref{prob:gm_active} can be decomposed into $K+2$ independent subproblems on each subtree $B_k$ and $B^q$ and the singleton $\{ m+1\}$.
 Note that the parameter $t$ only appears in the subproblems corresponding to the subtree $B^{q}$ and the singleton $\{ m+1\}$. Hence, from Assumption \ref{assump:tq}, it is not difficult to deduce that for any $t \in \Re$,
\[
x^{q}_i(t)	= x_i^*(t_q), \, i \in V_m \backslash V_{B^q}, \quad \mbox{ and } \quad  z^{q}_{i,j}(t) = z_{i,j}^*(t_q), \, (i,j)\in {\cal A}^q_=\setminus E_{B^q}.
\]
The subproblem associated with $\{ m+1 \}$ is easily solved via $x^q_{m+1}(t) = (f_{m+1}^*)'(-t)$.
Therefore, we only need to focus on the subproblem associated with the subtree $B^q$:
\begin{equation}
	\label{prob:subproblem_B}
	\min_{x\in \Re^{|V_{B^q}|}} \left\{ 
	\sum_{i\in V_{B^q}} \hat f_i(x_i) - t x_{i_m} \mid x_i = x_j, \, \forall \, (i,j)\in E_{B^q}	
	\right\},
\end{equation}
where 
\[
\hat f_i(x_i): = f_i(x_i) + \sum_{k \not\in V_{B^q} \atop (k,i)\in E_m } z_{k,i}^*(t_q) x_i  - \sum_{ k \not\in V_{B^q} \atop (i,k)\in E_m} z_{i, k}^*(t_q) x_i, \quad  \forall \, i \in V_{B^{q}}.
\]
Let $\mathcal{L}$ be the Lagrangian function associated with problem \eqref{prob:subproblem_B}
\[
{\mathcal{L}(x;z)} = \sum_{i\in V_{B^q}} \hat f_i(x_i) - t x_{i_m} - \sum_{(i,j)\in E_{B^q}} z_{i,j}(x_i - x_j), \quad \forall\, (x,z)\in \Re^{|V_{B^q}|}\times \Re^{|E_{B^q}|}.
\]
Then, the optimal primal-dual solution pair to problem \eqref{prob:subproblem_B} satisfies the following KKT system:
\begin{equation}
\label{eq:kkt_subproblem}
\left\{
\begin{aligned}
& x_i = x_j, \quad \forall\, (i,j)\in E_{B^q}, \\
& {f'_i}(x_i) + \sum_{ k \not\in V_{B^q} \atop (k,i)\in E_m} z_{k,i}^*(t_q) + \sum_{ k \in V_{B^q} \atop (k,i) \in E_{B^q}} z_{k,i}  - \sum_{  k \not\in V_{B^q} \atop (i, k) \in E_m} z_{i, k}^*(t_q) 
	- \sum_{k\in V_{B^q} \atop (i,k) \in E_{B^q}} z_{i,k} - {1_{ \{i=i_m\} }} t = 0, 
\quad \forall \, i\in V_{B^q}.
\end{aligned}
\right.
\end{equation}
Summing over all $i\in V_{B^q}$, we deduce from the above system that 
\[
\sum_{i\in V_{B^q}} {f'_i}(x^{q}_i (t)) = -\sum_{(k,i)\in E_m \atop k \not\in V_{B^q}, i\in V_{B^q}} z_{k,i}^*(t_q) + \sum_{  (i, k) \in E_m \atop  i\in  V_{B^q}, k \not\in V_{B^q}} z_{i, k}^*(t_q) + t, 
\]
i.e., 
\[
x^{q}_i (t) = ((\sum_{i \in V_{B^q}} f_i)^{*})' ( t + \sum_{(i,k) \in E_m\atop i\in V_{B^q}, k \notin V_{B^q}} z^*_{i,k}(t_q) - \sum_{(k,i) \in E_m \atop k\notin V_{B^q}, i \in V_{B^q}} z^*_{k,i}(t_q) ), \quad \forall ~ i \in V_{B^q}.
\]

{Next}, we obtain from the above KKT system {\eqref{eq:kkt_subproblem}} the following linear system corresponding to  $z_{i,j}$ for $(i,j)\in E_{B^q}$:
\[
\sum_{k:(i,k)\in E_{B^q}} z_{i,k} - \sum_{k:(k,i)\in E_{B^q}} z_{k,i} =  {f'_i}( x_i^q(t) ) + \sum_{k\notin V_{B^q}\atop (k,i)\in E_m} z^*_{k,i}(t_q) - \sum_{k\notin V_{B^q}\atop (i,k)\in E_m} z^*_{i,k}(t_q), \quad \forall \, i\in V_m \setminus \{i_m\}.
\]
Since $i_m$ is the ancestor of the subtree $B$, we obtain from Lemma \ref{lem:na_sol} the updated formula for  $z_{i,j}^{q}(t)$, $(i,j)\in E_{B^q}$. Thus, we proved \eqref{eq:update_z}. 

Finally, it is not difficult to see that if {the assumed conditions \eqref{eq:cs_EB} and \eqref{eq:zstar_xatq} are satisfied}, then $x^{q}(t)$ and $\tilde z^{q}(t)$ satisfy the complementarity conditions in the KKT system \eqref{eq:kkt_gm}.
The rest equations in \eqref{eq:kkt_gm} hold automatically by noting \eqref{eq:tildez_1} and the KKT system \eqref{eq:kkt_subproblem}.
\end{proof}

Using the semi-closed formulas in Proposition \ref{prop:closedform}, we compute the following lower bound $\Delta t_q \le 0$:
\[
\Delta t_q := \min \left\{ \Delta t \mid  \mbox{ 
\eqref{eq:cs_EB} and \eqref{eq:zstar_xatq} hold for all } t\in [t_q + \Delta t, t_q]
\right\}.
\]
The computations are divided into two parts. 
Firstly, we focus on the value of $z_{i,j}^{q}(t)$ for $(i,j)\in E_{B^{q}}$.
For any $(i,j)\in E_{B^{q}}$, we note that $z_{i,j}^{q}(t_q)\in [-\lambda_{i,j}, \mu_{i,j}]$ and $z_{i,j}^{q}(t)$ is increasing if $i \triangleleft j$ and is decreasing if $j\triangleleft i$ with respect to $t$ from \eqref{eq:update_z}. 
We define the threshold $\Delta (E_{B^{q}})$ as follows:
\begin{equation}
\label{def:delta_EB}
	\Delta (E_{B^{q}}):= \left\{
\begin{aligned}
	& \max_{(i,j)\in E_{B^{q}}}  \Delta t_{i,j}, \quad \mbox{if } E_{B^{q}}\neq \emptyset, \\[2pt]
   & -\infty, \quad \mbox{otherwise}.
\end{aligned}
	\right.
\end{equation}
Here, each {$\Delta t_{i,j} \le 0$} solves
\begin{equation}
\label{eq:delta_inEB}
z_{i,j}^{q}(t_q + \Delta t_{i,j}) = {-\lambda_{i,j}} , \quad \mbox{if } \, i\triangleleft j, \quad \mbox{and} \quad  z_{i,j}^{q}(t_q + \Delta t_{i,j}) = {\mu_{i,j}} , \quad \mbox{if } \, j \triangleleft i.
\end{equation}
Next, the relations in \eqref{eq:zstar_xatq} corresponding to the edges in $\Omega^q$ are examined. For this purpose,  we divide $\Omega^q$ into two parts, namely,
\begin{equation}
\label{eq:Omegaq}
\Omega^q_+ = \left\{ (i,j) \in \Omega^q \mid i \in V_{B^{q}}, \, j\in V_m \backslash V_{B^{q}}
\right\}	\mbox{ and } \Omega^q_- = \left\{ (i,j) \in \Omega^q \mid i \in V_m \backslash V_{B^{q}}, \, j\in V_{B^{q}}
\right\},
\end{equation}
and handle them separately. From \eqref{eq:update_x}, we know that $x_i^{q}(t)$ takes the same value for all $i\in V_{B^{q}}$ and is increasing with respect to $t$. Hence, we can simply denote $x_{B^q}(t) = x_i^q(t)$ for any $i\in V_{B^q}$.
Then, we compute the threshold $\Delta(\Omega^{q}) := \max\{ \Delta(\Omega^q_+), \Delta(\Omega^q_-)\}$, where 
\begin{equation}
\label{def:delta_O+}
	\Delta(\Omega^q_+) := \left\{
	\begin{aligned}
	& \Delta \overline{t} \mbox{ satisfying } 
	x_{B^q}(t_q + \Delta \overline{t}) = \max_{(i,j)\in \Omega^q_+ \cap {\cal A}^q_>} x^*_j(t_q), \quad  \mbox{if } \; \Omega^q_+ \cap {\cal A}^q_> \neq \emptyset, \\[2pt]
	& -\infty, \mbox{ otherwise,}
	\end{aligned}
	\right.
\end{equation}
and 
\begin{equation}
\label{def:delta_O-}
	\Delta(\Omega^q_-):= \left\{
	\begin{aligned}
	& \Delta \overline{t} \mbox{ satisfying } 
	x_{B^q}(t_q + \Delta \overline{t} ) = \max_{(i,j) \in \Omega^q_-  \cap {\cal A}^q_<} x^*_i(t_q), \quad  \mbox{if} \; \Omega^q_- \cap {\cal A}^q_< \neq \emptyset, \\[2pt]
	& -\infty, \mbox{ otherwise.}
	\end{aligned}
	\right.
\end{equation}
It can be easily verified that 
\begin{equation}
	\label{eq:dtq}
	\Delta t_q = \max\{\Delta(E_{B^{q}}), \Delta(\Omega^q)\}.
\end{equation}
Thus, using Proposition \ref{prop:closedform}, we can obtain the semi-closed form for the optimal solution $x^*(t)$, as well as its corresponding dual optimal solution $z^*(t)$, to problem \eqref{prob:gm} for any $t\in [t_q + \Delta t_q , t_q]$.

Now, we are ready to discuss the search of $t_{q+1}$.
Note that according to Assumption \ref{assump:tq}, we have 
\[x^q_{i_m}(t_q) - x^q_{m+1}(t_q) = x^*_{i_m}(t_q) - x^*_{m+1}(t_q)  > 0.\]
Using the closed-form formulas in Proposition \ref{prop:closedform}, we know that $x^q_{i_m}(t) - x^q_{m+1}(t)$ is strictly increasing with respect to $t$, and we can obtain a unique $\Delta \widetilde{t}_q < 0$ via solving the following univariate nonlinear equation:
\[x_{i_m}^q(t_q + \Delta {\widetilde{t}_q}  ) - x_{m+1}^q(t_q + \Delta \widetilde{t}_q  ) = 0,\]
which is nothing but the optimality condition associated with the following univariate strongly convex optimization problem:
\[
t_q +  \Delta \widetilde{t}_q  = \argmin_{t} \left\{  (\sum_{i \in  V_{B^{q}}} f_i)^*( t + \beta^{q} ) + (f_{m+1}^*)(-t)\right\}.
\]
The existence of $\Delta \widetilde{t}_q$ is thus guaranteed. Then, we set
\begin{equation}
\label{eq:t_update}
t_{q+1} = \max\{ t_q + \Delta t_q,  t_q + \Delta \widetilde{t}_q  ,  -\lambda_{i_m, m+1} \}.
\end{equation}
As one can observe, it always holds that $t_{q+1} \in [t_{q}  +\Delta t_q, t_q]$ and  
\begin{equation}
\label{eq:xstar_tqp1}
\begin{aligned}
x_{i_m}^*(t_{q+1}) - x_{m+1}^*(t_{q+1}) = {}&x_{i_m}^q(t_{q+1}) - x_{m+1}^q(t_{q+1})\\[2pt]
\ge{}&  x_{i_m}^q(t_{q} + \Delta \widetilde{t}_q ) - x_{m+1}^q(t_{q} + \Delta \widetilde{t}_q ) = 0.
\end{aligned}
\end{equation}
Then, we reveal the relation between  $t_{q+1}$ and $t^*$ in the following lemma.

\begin{lemma}
	\label{lem:opt_tqp1}
	It holds that $-\lambda_{i_m, m+1} \le t^* \le t_{q+1} \le t_q \le 0$. Moreover, $t_{q+1} = t^*$ if and only if $x_{i_m}^*(t_{q+1}) - x_{m+1}^*(t_{q+1}) = 0$ or $t_{q+1} = -\lambda_{i_m, m+1}$.
\end{lemma}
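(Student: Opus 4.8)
The plan is to reduce everything to the monotone behaviour of the scalar primal gap $g(t):=x^*_{i_m}(t)-x^*_{m+1}(t)$ associated with the parametric problem \eqref{prob:gm}, in the present regime $f'_{m+1}(x^{(m)}_{i_m})>0$. First I would record that $x^*_{m+1}(t)=(f^*_{m+1})'(-t)$ is strictly decreasing, while $x^*_{i_m}(t)$ is non-decreasing: since $x_{m+1}$ decouples, $x^*_{i_m}(t)$ is the $i_m$-component of the minimizer of the $t$-perturbed objective over $V_m$, and a two-point interchange of the optimality inequalities at $t_1<t_2$ gives $(t_2-t_1)\big(x^*_{i_m}(t_1)-x^*_{i_m}(t_2)\big)\le 0$. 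Hence $g$ is continuous and strictly increasing on $\Re$. Since we are in the case $f'_{m+1}(x^{(m)}_{i_m})>0$, we have $g(0)=x^{(m)}_{i_m}-(f^*_{m+1})'(0)>0$, and because $x^*_{m+1}(t)\to+\infty$ as $t\to-\infty$ while $x^*_{i_m}(t)\le x^{(m)}_{i_m}$ for $t\le 0$, the function $g$ has a unique root $\bar t<0$.

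Next I would pin down $t^*$ exactly. Recall from Proposition \ref{prop:direction} that $t^*\le 0\le\mu_{i_m,m+1}$ in this regime. Then the branch $g(t^*)<0$ in \eqref{eq:t_star} is impossible (it would force $t^*=\mu_{i_m,m+1}\ge 0$, hence $t^*=0$ and $g(0)<0$, contradicting $g(0)>0$), so $g(t^*)\ge 0$ and therefore $t^*\ge\bar t$ by monotonicity. If $\bar t\ge-\lambda_{i_m,m+1}$, then $t^*>\bar t$ would give $g(t^*)>0$, forcing $t^*=-\lambda_{i_m,m+1}\le\bar t$, a contradiction, so $t^*=\bar t$; if $\bar t<-\lambda_{i_m,m+1}$, then $g(-\lambda_{i_m,m+1})>0$ and \eqref{eq:t_star} leaves $t^*=-\lambda_{i_m,m+1}$ as the only consistent value in $[-\lambda_{i_m,m+1},0]$. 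In both cases $t^*=\max\{\bar t,\,-\lambda_{i_m,m+1}\}$, which is the heart of the lemma.

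With this identity in hand the ordering is immediate. The bound $-\lambda_{i_m,m+1}\le t^*$ is built into the max. For $t^*\le t_{q+1}$ I would use that $t_{q+1}\ge-\lambda_{i_m,m+1}$ by its definition \eqref{eq:t_update}, while \eqref{eq:xstar_tqp1} gives $g(t_{q+1})\ge 0=g(\bar t)$, whence $t_{q+1}\ge\bar t$ by monotonicity; combining, $t_{q+1}\ge\max\{\bar t,-\lambda_{i_m,m+1}\}=t^*$. For $t_{q+1}\le t_q$ I would note that each of the three arguments of the max in \eqref{eq:t_update} is at most $t_q$: $\Delta t_q\le 0$ and $\Delta\widetilde t_q<0$ dispose of the first two, and the induction hypothesis $-\lambda_{i_m,m+1}\le t^*\le t_q\le 0$ (with base case $t_0=0$) disposes of the last and also yields $t_q\le 0$.

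Finally, the equivalence follows from the same two facts. If $g(t_{q+1})=0$, then $t_{q+1}=\bar t$ by uniqueness of the root, so $t^*=\max\{\bar t,-\lambda_{i_m,m+1}\}=t_{q+1}$ since $t_{q+1}\ge-\lambda_{i_m,m+1}$; if $t_{q+1}=-\lambda_{i_m,m+1}$, then $-\lambda_{i_m,m+1}\le t^*\le t_{q+1}=-\lambda_{i_m,m+1}$ forces $t^*=t_{q+1}$. Conversely, if $t_{q+1}=t^*=\max\{\bar t,-\lambda_{i_m,m+1}\}$, the maximum is attained either at $\bar t$, giving $g(t_{q+1})=0$, or at $-\lambda_{i_m,m+1}$, giving $t_{q+1}=-\lambda_{i_m,m+1}$. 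I expect the main obstacle to be the second paragraph: establishing the global strict monotonicity of $g$ and correctly discarding the $\mu_{i_m,m+1}$-branch of \eqref{eq:t_star}, so that the clean formula $t^*=\max\{\bar t,-\lambda_{i_m,m+1}\}$ is justified; once that is secured, both the inequality chain and the equivalence are bookkeeping resting on \eqref{eq:xstar_tqp1} and \eqref{eq:t_update}.
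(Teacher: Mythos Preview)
Your proof is correct, but it takes a genuinely different route from the paper. The paper argues \emph{locally}: it works only on the interval $[t_q+\Delta t_q,\,t_q]$ where Proposition~\ref{prop:closedform} identifies $x^*(t)$ with the closed-form $\mathcal{A}^q$-reduced solution $x^q(t)$, and uses the strict monotonicity of $x^q_{i_m}(t)-x^q_{m+1}(t)$ (immediate from the explicit formulas \eqref{eq:update_x}) to derive a contradiction from $t^*>t_{q+1}$. No global property of $x^*_{i_m}(t)$ is ever invoked, and no explicit formula for $t^*$ is produced.

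You instead work \emph{globally}: you prove that the true gap $g(t)=x^*_{i_m}(t)-x^*_{m+1}(t)$ is strictly increasing on all of $\Re$ via a two-point exchange argument for the parametric minimizer, locate its unique root $\bar t$, and then pin down $t^*=\max\{\bar t,-\lambda_{i_m,m+1}\}$ exactly. From this identity the whole lemma is bookkeeping. What your approach buys is a clean structural characterization of $t^*$ that is independent of the active-set machinery; what the paper's approach buys is that it never leaves the piecewise framework already built in Proposition~\ref{prop:closedform}, so no separate monotonicity or continuity argument for $x^*(\cdot)$ is needed. Both rely on \eqref{eq:xstar_tqp1} and \eqref{eq:t_update} for the final steps, and both use Proposition~\ref{prop:direction} to rule out the $\mu_{i_m,m+1}$ branch of \eqref{eq:t_star}.
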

\begin{proof}
	If $t^*> t_{q+1}$, we have from \eqref{eq:t_update} that $t^* > t_{q+1} \ge -\lambda_{i_m,m+1}$. 
	It then follows from \eqref{eq:t_star} that \[x_{i_m}^q(t^*) - x^q_{m+1}(t^*) = x_{i_m}^*(t^*) - x^*_{m+1}(t^*) = 0.\]
	However, we know from \eqref{eq:xstar_tqp1} and the strict monotonicity of $x^q_{i_m}(t) - x^q_{m+1}(t)$ that
	\[
		x_{i_m}^q(t^*) - x^q_{m+1}(t^*) > x_{i_m}^q(t_{q+1}) - x^q_{m+1}(t_{q+1}) \ge 0
	.\] We arrive at a contradiction. Thus, $t^* \le t_{q+1}$.

	Next, if $x_{i_m}^*(t_{q+1}) - x_{m+1}^*(t_{q+1}) = 0$ or $t_{q+1} = -\lambda_{i_m, m+1}$, one can easily verify that $t_{q+1}$, $x_{i_m}^*(t_{q+1})$ and  $x_{m+1}^*(t_{q+1})$ satisfy \eqref{eq:t_star}, i.e., $t^* = t_{q+1}$. Conversely, if $t^* = t_{q+1}$, we have $t_{q+1}\ge -\lambda_{i_m, m+1}$. If $t_{q+1} > -\lambda_{i_m, m+1}$, it follows directly from \eqref{eq:t_star} that 
	$x_{i_m}^*(t^*) - x^*_{m+1}(t^*) = 0$. We thus complete the proof of the lemma.
\end{proof}

\begin{remark}
It is only necessary to compute $\Delta \widetilde{t}_q$ at most once during the entire search process for $t^*$.
Indeed, let
\begin{equation*}
	\Delta_* := \left\{
	\begin{aligned}
		& x_{i_m}^q(t_q + \Delta t_q) - x_{m+1}^q(t_q + \Delta t_q), \quad \mbox{if } \; \Delta t_q > -\infty, \\[2pt]
		& -\infty, \quad \mbox{otherwise.}
	\end{aligned}
	\right.
\end{equation*}
If $\Delta_* \ge 0$, then by  the strict monotonicity of $x^q_{i_m}(t) - x^q_{m+1}(t)$, we must have $\Delta \widetilde{t}_q  \le \Delta t_q$.
In this case, we can directly set
\begin{equation*}
t_{q+1} = 	\max\{ t_q + \Delta t_q, -\lambda_{i_m, m+1} \},
\end{equation*}
without computing $\Delta \widetilde{t}_q $. 
Only when $\Delta_* < 0$, we shall compute $\Delta \widetilde{t}_q $ and set 
\[
	t_{q+1} = 	\max\{ t_q + \Delta \widetilde{t}_q , -\lambda_{i_m, m+1} \}.
\]
Then, from Lemma \ref{lem:opt_tqp1}, it holds that  $t_{q+1} = t^*$.
Therefore, $\Delta \widetilde{t}_q $ only needs to be computed at most once. 
\end{remark}

If $t_{q+1} \neq t^*$, we know from \eqref{eq:t_update}, \eqref{eq:xstar_tqp1}, and Lemma \ref{lem:opt_tqp1} that $t^* < t_{q+1}$ and 
\begin{equation}\label{eq:tqp1}
 t_{q+1} = t_q + \Delta t_q, \quad \mbox{and } \quad x_{i_m}^*(t_{q+1}) - x_{m+1}^*(t_{q+1}) > 0.
\end{equation}
Then, we give the details of the construction of ${\cal A}^{q+1}$.
Let $\mathcal{M}(E_{B^{q}}) = \mathcal{M}(E_{B^{q}}^+) \cup \mathcal{M}(E_{B^{q}}^-)$ with
\begin{equation}
\label{eq:MEBqpm}
\begin{cases}
\mathcal{M}(E_{B^{q}}^+) \;=\; \{ (i,j) \in E_{B^{q}}  \mid \Delta t_{i,j} = \Delta t_q, \; \mbox{and} \; i \triangleleft j \},\\
\mathcal{M}(E_{B^{q}}^-) \;=\; \{ (i,j) \in E_{B^{q}}  \mid \Delta t_{i,j} = \Delta t_q, \; \mbox{and} \; j \triangleleft i \},\\
\end{cases}
\end{equation}
and $\mathcal{M}(\Omega^q) =  \mathcal{M}(\Omega^q_+) \cup \mathcal{M}(\Omega^q_-)$ with 
\begin{equation}
	\label{eq:Momega}
\begin{cases}
\mathcal{M}(\Omega^q_+) ={} \{ (i,j) \in \Omega^q_+ \cap {\cal A}(t_q)_> \mid x^q_{i}(t_q + \Delta t_q ) = x^*_j(t_q) \},  \\
\mathcal{M}(\Omega^q_-) ={} \{ (i,j) \in \Omega^q_- \cap {\cal A}(t_q)_< \mid x^q_{j}(t_q + \Delta t_q ) = x^*_i(t_q) \}. \\
\end{cases}
\end{equation}
The active set $\mathcal{A}^{q+1}$ is constructed via 
\begin{equation}
\label{eq:update_activeset}
\begin{cases}
\mathcal{A}^{q+1}_{=} ={} \big( \mathcal{A}^{q}_{=} \cup \mathcal{M}(\Omega^q) \big) \backslash \mathcal{M}(E_{B^{q}}),  \\
\mathcal{A}^{q+1}_{>} ={} \big( \mathcal{A}^{q}_{>} \cup \mathcal{M}(E_{B^{q}}^+) \big) \backslash \mathcal{M}(\Omega^q_+),   \\
\mathcal{A}^{q+1}_{<} ={} \big( \mathcal{A}^{q}_{<} \cup \mathcal{M}(E_{B^{q}}^-) \big)\backslash \mathcal{M}( \Omega^q_-).  \\
\end{cases}
\end{equation}

Similar to \eqref{eq:tildez_1}, we can construct $\widetilde{z}^q(t_{q+1})$ from $z^q(t_{q+1})$ as follows:
\begin{equation*}
	\tilde{z}_{i,j}^{q}(t_{q+1}) = \left\{ 
	\begin{aligned}
	& z_{i,j}^{q}(t_{q+1}), \, \mbox{ if }(i,j)\in \mathcal{A}^q_{=}, \\[2pt]
	& z_{i,j}^*(t_q), \, \mbox{ otherwise},
	\end{aligned}
	\right.\quad \forall \, (i,j)\in E_m.
\end{equation*}
Then, we obtain the optimal primal-dual solution pair $(x^*(t_{q+1}), z^*(t_{q+1})) = (x^q(t_{q+1}), \widetilde{z}^q(t_{q+1}) )$ to problem \eqref{prob:gm} with $t = t_{q+1}$.

Next, it can be easily verified from the construction of ${\cal A}^{q+1}$ in \eqref{eq:update_activeset}, and the computation steps of $t_{q+1}$ in \eqref{eq:t_update} that the new active set $\mathcal{A}^{q+1}$ and the primal-dual pair $(x^*(t_{q+1}), z^*(t_{q+1}))$ are compatible. This, together with \eqref{eq:tqp1}, allows us to perform induction on {$q\in \mathbb{N}$} and obtain that for all $q\in \mathbb{N}$, as long as $t_q \ne t^*$, it always holds that ${\cal A}^q$ and $(x^*(t_q),z^*(t_q))$ are compatible and 
\[
	x_{i_m}^*(t_q) - x_{m+1}^*(t_q) > 0.
\]
Therefore, we can iteratively repeat the above searching process, i.e., from $(t_q, x^*(t_q), z^*(t_q),{\cal A}^q)$ to $(t_{q+1}, x^*(t_{q+1}), z^*(t_{q+1}),{\cal A}^{q+1})$, until $t^*$ is obtained. The details of the search process {are} summarized in Algorithm \ref{alg:update_-}.
We name it {the} {\textit{update$^-$}} {subroutine}, since in this case $t^* < 0$. The procedure corresponding to the case with $t^* > 0$, which we termed as {the} \textit{update$^+$} {subroutine}, can be easily adapted from the {\textit{update$^-$}} subroutine. Details of the \textit{update$^+$} subroutine can be found in the Appendix.

\begin{algorithm}[htbp]
\caption{ $(t_{q+1}, x^*(t_{q+1}), z^*(t_{q+1}), \mathcal{A}^{q+1}, t^*) = \mbox{{\bf update}}^{-}(t_q, x^*(t_q), z^*(t_q), \mathcal{A}^{q},\lambda)$}
\label{alg:update_-}
\begin{algorithmic}[1]
\State {{\bf Input}: $(t_q, x^*(t_q), z^*(t_q), \mathcal{A}^{q})$, $\lambda \ge 0$; } 	
\State {Compute $\Delta (E_{B^{q}}),  \Delta(\Omega^q_+), \Delta(\Omega^q_-)$ via definitions \eqref{def:delta_EB}, \eqref{def:delta_O+} and \eqref{def:delta_O-}}
	  \State {$ \Delta(\Omega^q) = \max \{ \Delta(\Omega^q_-) , \Delta(\Omega^q_+) \}$} 
	  \State{$ \Delta t_q = \max \{ \Delta (E_{B^{q}}), \Delta(\Omega^q) \}  $ }
      \State {$\Delta^*= x_{i_m}^q(t_q + \Delta t_q) - x_{m+1}^q(t_q + \Delta t_q) $ }		
	  \If {$\Delta^* \ge 0$ }	
	  \State {$t_{q+1} = \max \{ t_q + \Delta t_q , -\lambda   \}$}
	  \Else
	  \State {$\Delta \widetilde{t}^q = -t_q + \argmin\limits_{t} \;\left\{ (\sum_{i \in V_{B^{q}}} f_i)^*( t + \beta^{q} ) + (f_{m+1}^*)(-t)\right\}$ }
	  \State {$t_{q+1} = \max \{ t_q + \Delta \widetilde{t}^q , -\lambda   \}$ }
	  \EndIf
	  \State{$( x^*(t_{q+1}), z^*(t_{q+1}) ) = (x^{q}( t_{q+1} ), \widetilde{z}^{q}( t_{q+1} ) )$}
	  \If { $t_{q+1} = -\lambda$ \textbf{or} $x_{i_m}^*(t_{q+1}) = x_{m+1}^*(t_{q+1})$ }
			\State{ $t^* = t_{q+1}$ }
			\State{ Let $\mathcal{A}^{q+1} = \{ (i,j, \#) \mid (i,j) \in E_m, \; x_i^*(t_{q+1}) \, \# \, x_j^*(t_{q+1}) \}$  }
		\Else
			\State{ $t^* = \emptyset$}	
			\State{ Update $\mathcal{A}^{q+1}$ from $\mathcal{A}^{q}$ via \eqref{eq:update_activeset}}
		\EndIf  
	\State {{\bf Output}: $(t_{q+1}, x^*(t_{q+1}), z^*(t_{q+1}), \mathcal{A}^{q+1}, t^*)$ }
	\end{algorithmic}
\end{algorithm}

Before presenting the details of the {\it generate} subroutine, we make some key observations about the active set $\mathcal{A}^{q+1}$ in the following lemma.
\begin{lemma}
\label{lem:observations}
For any given {$q \in \mathbb{N}$}, {the following propositions hold:}
\begin{enumerate}[(a)]
\item If $t_{q+1} \neq t^*$, then $\mathcal{A}^{q+1}_{=} \neq \mathcal{A}^{q}_{=}$;
\item If $(i,j) \in  \mathcal{M}(E_{B^{q}})$, then for any $ \widehat{q} \in \mathbb{N}$ with $\widehat{q} > q$ and $t_{\widehat{q}} \neq t^*$, $(i,j) \notin \mathcal{A}^{\widehat{q}}_{=}$.
\end{enumerate}
\end{lemma}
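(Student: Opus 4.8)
For part (a), the plan is to exploit the identity $t_{q+1}=t_q+\Delta t_q$ recorded in \eqref{eq:tqp1}, which holds precisely when $t_{q+1}\neq t^*$. Since $t_q$ and $t_{q+1}$ are finite, this forces $\Delta t_q>-\infty$, so by \eqref{eq:dtq} the maximum $\Delta t_q=\max\{\Delta(E_{B^q}),\Delta(\Omega^q)\}$ is attained by a genuine (finite) threshold. If the maximizer is $\Delta(E_{B^q})$, then by \eqref{def:delta_EB} some internal edge attains $\Delta t_{i,j}=\Delta t_q$, whence $\mathcal{M}(E_{B^q})\neq\emptyset$ by \eqref{eq:MEBqpm}; otherwise the maximizer is $\Delta(\Omega^q)$ and the analogous reading of \eqref{def:delta_O+}, \eqref{def:delta_O-}, and \eqref{eq:Momega} gives $\mathcal{M}(\Omega^q)\neq\emptyset$. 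I would then invoke the update rule \eqref{eq:update_activeset} together with the two disjointness facts $\mathcal{M}(E_{B^q})\subseteq E_{B^q}\subseteq\mathcal{A}^q_{=}$ and $\mathcal{M}(\Omega^q)\subseteq\Omega^q$, the latter consisting of edges with exactly one endpoint in $V_{B^q}$ and hence satisfying $\mathcal{M}(\Omega^q)\cap\mathcal{A}^q_{=}=\emptyset$ and $\mathcal{M}(\Omega^q)\cap\mathcal{M}(E_{B^q})=\emptyset$. Consequently a nonempty $\mathcal{M}(E_{B^q})$ deletes an edge of $\mathcal{A}^q_{=}$ that is not re-added, and a nonempty $\mathcal{M}(\Omega^q)$ inserts an edge absent from $\mathcal{A}^q_{=}$ that is not deleted; either way $\mathcal{A}^{q+1}_{=}\neq\mathcal{A}^q_{=}$.

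Part (b) is the substantive one, and the key tool I would first establish is a global \emph{strict monotonicity} of the distinguished coordinate: the map $t\mapsto x^*_{i_m}(t)$ is continuous and strictly increasing on the search interval. Continuity follows from the uniqueness and Lipschitz stability of the solution of the strongly convex problem \eqref{prob:gm} under the continuous parameter $t$; and on each phase with a fixed active set, the semi-closed formula \eqref{eq:update_x} gives $x^*_{i_m}(t)=\big((\sum_{l\in V_{B}}f_l)^{*}\big)'(t+\beta)$ for the current $i_m$-block $B$, which is strictly increasing in $t$ because the conjugate of a strongly convex function has a strictly increasing derivative. A continuous function that is strictly increasing on each piece of a finite partition of an interval is strictly increasing throughout, so the monotonicity is global across all merges and splits.

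Next I would analyze the split. Suppose $(i,j)\in\mathcal{M}(E_{B^q})$ and, by symmetry, take $i\triangleleft j$, so $(i,j)\in\mathcal{M}(E_{B^q}^+)$ and the dual $z^q_{i,j}$ reaches $-\lambda_{i,j}$ at $t_{q+1}$. Removing $(i,j)$ splits $B^q$ into the descendant part $C_i\ni i$, which does \emph{not} contain the ancestor $i_m$, and the complementary part carrying $j$ and $i_m$. After the update \eqref{eq:update_activeset} the block $C_i$ is detached from the $i_m$-block and frozen at the value $v:=x_{B^q}(t_{q+1})=x^*_{i_m}(t_{q+1})$. The crucial structural observation is that only the block containing $i_m$ carries the parameter $t$ in \eqref{eq:update_x}, so every other block has a constant value and constant duals; hence detached blocks neither split nor merge with one another, and $C_i$ can change only by being re-absorbed into the $i_m$-block. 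Because $G_m$ is a tree, $i$ and $i_m$ are separated by the single edge $(i,j)$, so $(i,j)\in\mathcal{A}^{\widehat q}_{=}$ can hold only if $C_i$ is re-absorbed along $(i,j)$, which forces $x_i=x_j$, i.e. $x^*_{i_m}(t_{\widehat q})=v$.

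I would then close the argument by monotonicity, treating $\widehat q=q+1$ and $\widehat q\ge q+2$ separately. For $\widehat q=q+1$, the conclusion $(i,j)\notin\mathcal{A}^{q+1}_{=}$ is immediate from \eqref{eq:update_activeset}, which removes $\mathcal{M}(E_{B^q})$ from the equality set. For $\widehat q\ge q+2$ we have $t_{\widehat q}<t_{q+1}$, so strict monotonicity yields $x^*_{i_m}(t_{\widehat q})<x^*_{i_m}(t_{q+1})=v=x_i$, whence the re-absorption condition $x^*_{i_m}(t_{\widehat q})=v$ can never be met, $C_i$ remains detached, and $(i,j)\notin\mathcal{A}^{\widehat q}_{=}$; the case $j\triangleleft i$ is symmetric with $\mathcal{A}_{<}$ replacing $\mathcal{A}_{>}$. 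I expect the main obstacle to be the rigorous justification that the parameter strictly decreases at each non-terminal step, so that indeed $t_{\widehat q}<t_{q+1}$ for $\widehat q\ge q+2$: this amounts to showing $\Delta t_q<0$, which in turn relies on Assumption \ref{assump:tq} and on the fact that the surviving internal duals are strictly interior at $t_{q+1}$ (the split edges are exactly those hitting their bounds, and a freshly merged edge moves into the interior as $t$ decreases). Establishing this strictness, together with the inertness of detached blocks, is where the careful work lies.
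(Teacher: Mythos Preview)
Your argument for part (a) is correct and matches the paper's proof essentially verbatim: from $t_{q+1}=t_q+\Delta t_q$ with $\Delta t_q$ finite, one of $\mathcal{M}(E_{B^q})$, $\mathcal{M}(\Omega^q)$ is nonempty, and disjointness of these sets together with \eqref{eq:update_activeset} forces a change in $\mathcal{A}^{q}_{=}$.

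For part (b), however, your route is genuinely different from the paper's, and the obstacle you flag at the end is a real gap, not a technicality. The paper does \emph{not} use monotonicity of $t\mapsto x^*_{i_m}(t)$ at all; instead it runs a purely combinatorial induction on the label of the edge. Concretely, for $i\triangleleft j$ the paper shows that $(i,j)\in\mathcal{A}^{\widehat q}_{>}$ for every $\widehat q>q$. The update rule \eqref{eq:update_activeset} says an edge can leave $\mathcal{A}_{>}$ only through $\mathcal{M}(\Omega_{+})$, which would require $i\in V_{B^{\widehat q}}$; but since $(i,j)$ separates $i$ from $i_m$ in the tree $G_m$ and $(i,j)\notin\mathcal{A}^{\widehat q}_{=}$, the node $i$ can never lie in the $i_m$-block, so $(i,j)\notin\Omega^{\widehat q}_{+}$ and the label $>$ persists. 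This two-line induction needs no strict decrease of $t$ and no analytic control of $x^*_{i_m}$.

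Your monotonicity argument, by contrast, hinges on $t_{\widehat q}<t_{q+1}$ strictly, and this can fail: after a merge, a freshly absorbed edge enters $E_{B^{q+1}}$ with its dual sitting exactly at a bound $-\lambda$ or $\mu$, and edges just removed from $E_{B^q}$ land in $\mathcal{A}^{q+1}_{>}$ or $\mathcal{A}^{q+1}_{<}$ with the two primal endpoints equal, so $\Delta t_{q+1}=0$ is not excluded a priori. Ruling this out requires precisely the kind of combinatorial tracking (which direction does the dual move, which side of the inequality is active) that already proves the lemma directly. In short, your approach would work once the strict-decrease claim is established, but establishing it is no easier than the paper's argument, and the paper's route avoids the issue altogether.
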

\begin{proof}
We prove (a) first. If $t_{q+1} \neq t^*$, from \eqref{eq:tqp1}, we have $t_{q+1} = t_q + \Delta t_q > t^*$.
Hence,  at least one of the two sets, $\mathcal{M}(E_{B^{q}})$ and $\mathcal{M}(\Omega^q)$, is nonempty.
The desired result thus follows since $\mathcal{A}^{q+1}_{=} = \big(  \mathcal{A}^{q+1}  \cup \mathcal{M}(\Omega^q) \big) \backslash \mathcal{M}(E_{B^{q}})$ and {$\mathcal{M}(E_{B^{q}}) \cap \mathcal{M}(\Omega^q) = \emptyset$}.

Next, we prove (b). 
We first consider the case where $i \triangleleft j$.  If $(i,j)\in {\cal M}(E_{B^q})$ and $i \triangleleft j$, we see from \eqref{eq:delta_inEB}, \eqref{eq:MEBqpm} and \eqref{eq:update_activeset} that 
\[
z_{i,j}^{q}(t_{q} + \Delta t_q) = -\lambda_{i,j}, \mbox{ and } (i,j) \in \mathcal{M}(E^+_{B^{q}}) \subseteq \mathcal{A}^{q+1}_>.
\]   
Since $(i,j) \in  \mathcal{A}^{q+1}_>$, then at least one of $i$ and $j$ is not in $B^{q+1}$, i.e., $(i,j)\notin E_{B^{q+1}}$.
Since $i \triangleleft j$, we have the following two possible cases:
\begin{itemize}

\item[{(\rm i)}]  $j\in B^{q+1}, i \notin B^{q+1} $. In this case we have $(i,j) \in \Omega^{q+1}_- $. Since $(i,j) \in \mathcal{A}^{q+1}_>$, it holds from \eqref{eq:Momega} that $(i,j) \notin \mathcal{M}(\Omega^{q+1}_+)$.
Thus, \eqref{eq:update_activeset} implies that  $(i,j) \in \mathcal{A}^{q+2}_>$.

\item[{(\rm ii)}]  $j \notin B^{q+1}, i \notin B^{q+1}$. From \eqref{eq:Omegaq}, we know that 
$(i,j)\notin \Omega^{q+1}$. Hence, \eqref{eq:Momega} and \eqref{eq:update_activeset} imply that $(i,j) \in \mathcal{A}^{q+2}_>$.
\end{itemize}
Therefore, in both cases, we have  $(i,j) \notin \Omega^{q+2}_{+}$ and $(i,j) \in \mathcal{A}^{q+2}_>$. {By induction, we can prove that $(i,j) \notin \Omega^{\widehat{q}}_{+}$ and $(i,j) \in \mathcal{A}^{\widehat q}_>$ for all $\widehat q>q$. } 

Similarly, for the case with $j \triangleleft i$, we can obtain that $(i,j) \notin \Omega^{\widehat{q}}_{-}$ and $(i,j) \in \mathcal{A}^{\widehat q}_<$ for all $\widehat q>q$. We thus complete the proof.
\end{proof}

With {the} two subroutines {\textit{update$^-$}} and {\textit{update$^+$}} at hand, we are ready to present the details of the \textit{generate} subroutine in Algorithm \ref{alg:generate}. As one can easily observe, the complexity of the \textit{generate} subroutine depends critically on the number of executions of the while-loops (i.e., lines 9-12 and lines 15-18 in Algorithm \ref{alg:generate}).

\begin{algorithm}[htbp]
	\caption{ The \textit{generate} subroutine: $(x^{(m+1)} ,z^{(m+1)} ) = \mbox{\bf generate}( x^{(m)}, z^{(m)} , G_{m+1}  )$ }
	\label{alg:generate}
	\begin{algorithmic}[1]
		\State {{\bf Input:} $x^{(m)} \in \Re^m , z^{(m)} \in \Re^{m-1}, G_{m+1} = (V_{m+1}, E_{m+1})$} 
		\State {Let $ x^*_{i}(0) = x^{(m)}_{i} $ for $i \in V_m$ and $x^*_{m+1}(0) = (f^*_{m+1})'(0)$ }	
		\State {Let $ z^*_{i,j}(0) = z^{(m)}_{i,j} $ for $(i,j) \in E_m$  and $t^* = \emptyset$}	
		\\
		\If{ $f'_{m+1}(x_{i_m}^*(0)) = 0$}
		\State { $t^* = 0$}		
		\ElsIf{$f'_{m+1}(x_{i_m}^*(0)) > 0$} 
		\State{Let $t_0 = 0$, $q = 0$ and $\mathcal{A}^0$ be the active set constructed from $x^*(0)$ as in \eqref{eq:A0}}
		\While{$t^* = \emptyset$}
		\State{$(t_{q+1}, x^*(t_{q+1}), z^*(t_{q+1}), \mathcal{A}^{q+1},t^*)$=\textit{update$^-$}$(t_{q}, x^*(t_{q}), z^*(t_{q}), \mathcal{A}^{q}, \lambda_{i_m, m+1} )$}
		\State{$q = q+1$}
		\EndWhile
		\Else %
		\State{Let $t_0 = 0$, $q = 0$ and $\mathcal{A}^0$ be the active set constructed from $x^*(0)$ as in \eqref{eq:A0}}
		\While{$t^* = \emptyset$}
		\State{$(t_{q+1}, x^*(t_{q+1}), z^*(t_{q+1}), \mathcal{A}^{q+1},t^*)$=\textit{update$^+$}$(t_{q}, x^*(t_{q}), z^*(t_{q}), \mathcal{A}^{q},  \mu_{i_m, m+1} )$}
		\State{$q = q+1$}
		\EndWhile
		\EndIf
		\State {Let $x^{(m+1)} = x^*(t^*)$, $z_{i,j}^{(m+1)} = z^*_{i,j}(t^*)$ for $(i,j) \in E_m$, and $z_{i_m,m+1}^{(m+1)} = t^*$}
        \State {{\bf Return:} $( x^{(m+1)} , z^{(m+1)}) \in \Re^{m+1} \times \Re^{m}$ }
	\end{algorithmic}
\end{algorithm}

\begin{lemma}
	\label{lem:finite}
	The while-loops executed in the {\it generate} subroutine will find $t^*$ in at most $2m-1$ iterations.
	\end{lemma}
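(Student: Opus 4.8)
The plan is to follow the evolution of the equality part $\mathcal{A}^q_=$ of the active set across the iterations of the while-loop and to bound the total number of changes it can undergo. Write $k$ for the number of times the loop body executes, so the run produces $t_1,\ldots,t_k$ with $t_k=t^*$, and the iterations indexed by $q=0,1,\ldots,k-2$ are all \emph{non-final}, i.e.\ $t_{q+1}\neq t^*$. Since $G_m$ is a tree on the $m$ vertices $V_m$, we have $|E_m|=m-1$, so the target $k\le 2m-1$ is exactly ``twice the number of available edges, plus one.'' I would aim to show that each non-final iteration strictly modifies $\mathcal{A}_=$, while the total number of modifications is capped by $2(m-1)$.

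First I would record the precise effect of one non-final update. From the update rule \eqref{eq:update_activeset}, $\mathcal{A}^{q+1}_==\bigl(\mathcal{A}^q_=\cup\mathcal{M}(\Omega^q)\bigr)\setminus\mathcal{M}(E_{B^q})$. The ``added'' edges $\mathcal{M}(\Omega^q)\subseteq\Omega^q$ connect $B^q$ to a different block of $P_{\mathcal{A}^q}$ and, since $B^q$ is a full $\mathcal{A}^q$-connected component, are genuinely outside $\mathcal{A}^q_=$; the ``removed'' edges $\mathcal{M}(E_{B^q})\subseteq E_{B^q}\subseteq\mathcal{A}^q_=$ are genuinely inside; and the two are disjoint because $\Omega^q\subseteq E_m\setminus E_{B^q}$. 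Setting $a_q=|\mathcal{M}(\Omega^q)|$ and $r_q=|\mathcal{M}(E_{B^q})|$, Lemma \ref{lem:observations}(a) guarantees $\mathcal{A}^{q+1}_=\neq\mathcal{A}^q_=$ on every non-final iteration, which with the disjointness above forces $a_q+r_q\ge 1$.

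Next I would bound $\sum_q a_q$ and $\sum_q r_q$ by $m-1$ each. For removals this is immediate from Lemma \ref{lem:observations}(b): an edge placed in some $\mathcal{M}(E_{B^q})$ never reappears in any later $\mathcal{A}^{\widehat q}_=$, so it can never again lie in $E_{B^{q'}}\subseteq\mathcal{A}^{q'}_=$ to be removed a second time; hence each edge is removed at most once. The additions need a short argument: an edge must be absent from $\mathcal{A}^q_=$ to be added, so if one edge were added at two iterations $q_1<q_2$ it would have to leave $\mathcal{A}_=$ in between, and by the structure of \eqref{eq:update_activeset} the only way to leave is a removal at some $q'$ with $q_1<q'<q_2$. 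Then Lemma \ref{lem:observations}(b) forbids the edge from lying in $\mathcal{A}^{q_2+1}_=$ (note $q_2$ is non-final, so $t_{q_2+1}\neq t^*$), contradicting its addition at $q_2$ and the fact that genuine $\mathcal{M}(\Omega)$-additions occur only on non-final iterations. Thus each edge is added at most once too, giving $\sum_q a_q\le m-1$ and $\sum_q r_q\le m-1$.

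Finally I would assemble the count: summing $a_q+r_q\ge 1$ over the $k-1$ non-final iterations yields $k-1\le\sum_q(a_q+r_q)\le 2(m-1)$, hence $k\le 2m-1$. I expect the main obstacle to be the ``added at most once'' step, as it is the only point requiring one to combine the permanence statement of Lemma \ref{lem:observations}(b) with the exact semantics of the update rule \eqref{eq:update_activeset} and the observation that genuine additions happen only on non-final iterations; once this monotonicity is secured, the remaining argument is a direct double-counting over the $m-1$ edges of $E_m$.
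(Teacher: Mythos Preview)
Your proposal is correct and follows essentially the same approach as the paper: both arguments use Lemma~\ref{lem:observations}(a) to guarantee that each non-final iteration genuinely modifies $\mathcal{A}^q_=$, and Lemma~\ref{lem:observations}(b) to ensure that each of the $m-1$ edges of $E_m$ can be added to and removed from $\mathcal{A}_=$ at most once, yielding the bound $2(m-1)$ on non-final iterations. The only difference is cosmetic---the paper phrases the conclusion by arguing that after $2m-2$ non-final steps one must have $\mathcal{A}^{2m-2}_==\emptyset$ and then checks explicitly that $\Delta t_{2m-2}=-\infty$ forces termination, whereas you assemble the same ingredients into a direct counting inequality $k-1\le\sum_q(a_q+r_q)\le 2(m-1)$; your packaging is arguably cleaner.
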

	\begin{proof}
	Without loss of generality, we only consider the case $f'_{m+1}(x_{i_m}^*(0)) > 0$, i.e., $t^* < 0$.
	Assume that after $2m-2$ times executions of the while-loops, $t^*$ has not been found. That is, the algorithm generates $\{( t_{i}, x^*(t_{i}), z^*(t_{i}), \mathcal{A}^{i})\}_{i=1}^{2m-2}$ and $t_{i} > t^*$ for all $i=0,\ldots,2m-2$. 
	From  Lemma \ref{lem:observations}(a), we know that 
	\begin{equation}
		\label{eq:qneqqp1}
		\mathcal{A}^q_= \neq \mathcal{A}^{q+1}_=, \quad \forall \, q = 0,\ldots, 2m-3 \red{.}
	\end{equation}

	Next, we note from Lemma \ref{lem:observations}(b) that if some edge $(i,j) \in E_m$ is removed from $\mathcal{A}^q_=$ for some $q$, then $(i,j) \not \in \mathcal{A}^{\widehat q}_=$ for all $2m-2\ge \widehat q \ge q\ge 0$. 
	Therefore, for each edge $(i,j) \in E_m$, it can be added to and removed from $\mathcal{A}^q_=$ for at most once. This, together with \eqref{eq:qneqqp1} and the fact that $|E_m| = m-1$, implies that at $t_{2m-2}$, every edge in $E_m$ has been added to and removed from some $\mathcal{A}^q_{=}$. Thus, $\mathcal{A}^{2m-2}_{=} = \emptyset$, and the sets $\mathcal{A}^{2m-2}_{>}$ and $\mathcal{A}^{2m-2}_{<}$ remain unchanged in the next iterations, i.e., $E_{B^q} = \emptyset$, $\Omega^{2m-2}_+ \cap A^{2m-2}_> = \emptyset$ and $\Omega^{2m-2}_- \cap A^{2m-2}_< = \emptyset$. 
	Therefore, we have $\Delta t_{2m-2} = -\infty$ from its definition in \eqref{eq:dtq}. By \eqref{eq:t_update} and Lemma \ref{lem:opt_tqp1}, we have $t_{2m-1} = t^*$ and complete the proof.
\end{proof}

Lemma \ref{lem:finite} guarantees that $t^*$ can be found by the {\it generate} subroutine efficiently. Along with $t^*$, the $G_{m+1}$-optimal pair $(x^{(m+1)}, z^{(m+1)})$, i.e., the output of the {\it generate} subroutine, is also obtained. We thus naturally obtain the correctness of our Algorithm \ref{alg:outer}.

\begin{theorem}
\label{thm:main}
The output $x^{(n)}$ of Algorithm \ref{alg:outer} is the optimal solution to problem \eqref{prob:tree_gnio}.
\end{theorem}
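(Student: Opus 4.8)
The plan is to prove by induction on $m$ that for each $1 \le m \le n$ the pair $(x^{(m)}, z^{(m)})$ produced by Algorithm \ref{alg:outer} is exactly the $G_m$-optimal pair, i.e., the unique solution of the KKT system \eqref{eq:reduced}. Since $G_n = G$, Lemma \ref{lem:optimality} then identifies $x^{(n)}$ as the unique minimizer of \eqref{prob:tree_gnio}, which is the claim. For the base case $m = 1$, the subtree $G_1$ is the single node $1$ with no edges, so \eqref{eq:reduced} reduces to $f_1'(x_1) = 0$, whose unique solution is $x_1 = (f_1^*)'(0)$ with $z^{(1)} = \emptyset$; this is precisely the initialization in Algorithm \ref{alg:outer}.

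For the inductive step, I would assume $(x^{(m)}, z^{(m)})$ is the $G_m$-optimal pair and show that the output of the \textit{generate} subroutine satisfies the $G_{m+1}$-KKT system \eqref{align:vm}--\eqref{align:em+1}, so that by the uniqueness in Lemma \ref{lem:optimality} it must be the $G_{m+1}$-optimal pair. When $f'_{m+1}(x_{i_m}^{(m)}) = 0$, I would check directly that the explicit extension (setting $x_{m+1}^{(m+1)} = x_{i_m}^{(m)}$ and $z^{(m+1)}_{i_m,m+1} = 0$ while inheriting the rest) satisfies \eqref{align:vm}--\eqref{align:em+1}, using the induction hypothesis for the old equations and $-0 = f'_{m+1}(x_{i_m}^{(m)})$ for the new ones. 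When $f'_{m+1}(x_{i_m}^{(m)}) \ne 0$, say positive (the negative case is symmetric via \textit{update$^+$}), I would invoke Lemma \ref{lem:finite} to guarantee the while-loop halts after finitely many steps at a value $t^*$, and use the compatibility of $\mathcal{A}^q$ with $(x^*(t_q), z^*(t_q))$ maintained at every iteration so that Proposition \ref{prop:closedform} applies and $(x^*(t^*), z^*(t^*)) = (x^{q}(t^*), \tilde z^{q}(t^*))$ is the primal-dual optimal pair of the parametric problem \eqref{prob:gm}, hence solves \eqref{eq:kkt_gm}.

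The crux is a term-by-term identification of the parametric KKT system \eqref{eq:kkt_gm} at $t = t^*$ with the target system \eqref{align:vm}--\eqref{align:em+1}, under the assignment $z^{(m+1)}_{i_m,m+1} = t^*$. The stationarity equations in \eqref{eq:kkt_gm} for $i \in V_m \setminus \{i_m\}$ coincide with \eqref{align:vm}; the equation for $i = i_m$, which carries the extra $+t$ term, becomes \eqref{align:im}; the relation $-t^* = f'_{m+1}(x^*_{m+1}(t^*))$ becomes \eqref{align:m+1}; the complementarity conditions for $(i,j) \in E_m$ are \eqref{align:em}; and the termination condition \eqref{eq:t_star}, which Lemma \ref{lem:opt_tqp1} guarantees upon exit of the loop (either $x^*_{i_m}(t^*) = x^*_{m+1}(t^*)$ or $t^* = -\lambda_{i_m,m+1}$), supplies exactly the new-edge complementarity \eqref{align:em+1}. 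Hence the constructed triple solves the full $G_{m+1}$-KKT system, closing the induction.

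I expect the main obstacle to be bookkeeping rather than conceptual. One must verify that Assumption \ref{assump:tq}, the hypothesis of Proposition \ref{prop:closedform}, genuinely holds at the terminal iteration, and that the assembled dual candidate $\tilde z^{q}(t^*)$ correctly reconciles the values on $\mathcal{A}^q_=$ coming from the reduced problem with those frozen on the complementary edges, so that no stationarity equation is double-counted and the two sources agree on shared data. Once this matching is in place, the uniqueness argument of Lemma \ref{lem:optimality} makes the conclusion immediate.
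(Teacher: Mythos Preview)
Your proposal is correct and follows essentially the same approach as the paper: the paper treats Theorem~\ref{thm:main} as an immediate consequence of the preceding development (Proposition~\ref{prop:direction}, Proposition~\ref{prop:closedform}, Lemma~\ref{lem:opt_tqp1}, and Lemma~\ref{lem:finite}), remarking only that since the \textit{generate} subroutine returns the $G_{m+1}$-optimal pair from the $G_m$-optimal pair, correctness of Algorithm~\ref{alg:outer} follows ``naturally.'' Your inductive argument with the term-by-term identification of \eqref{eq:kkt_gm} and \eqref{eq:t_star} against \eqref{align:vm}--\eqref{align:em+1} is exactly the spelled-out version of what the paper leaves implicit.
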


At the end of this section, we provide a brief analysis of the worst-case complexity of our Algorithm \ref{alg:outer}. 
Here, we assume that for a given strongly convex differentiable function $f$ and $x\in \Re$, the computational complexity of finding $t$ such that $f'(t) = x$ is ${\cal O}(1)$. Then, the computational complexity of {\it update}$^{-}$ (and {\it update}$^+$) is ${\cal O}(m)$. By Lemma \ref{lem:finite}, we see that the computational  complexity of the \textit{generate} subroutine is $\mathcal{O}(m^2)$.
Therefore, the computational complexity of Algorithm \ref{alg:outer} is ${\cal O}(n^3)$.

\section{Conclusion}
\label{sec:conclusion}
In this paper, we focus on the convex isotonic regression problem \eqref{prob:tree_gnio} with  tree-induced generalized order restrictions.
Inspired by the successes of the PAVA, an efficient active-set based recursive approach, {ASRA}, is carefully designed to solve \eqref{prob:tree_gnio}. Under mild assumptions, we show that {ASRA} has a polynomial time computational complexity.

\section{Appendix}
\subsection{The arborescence assumption on $G$}
For the given $G = (V,E)$ in the formulation of problem \eqref{prob:tree_gnio}, let $\widehat{G} = (V, \widehat{E})$ be an arborescence that shares the same underlying graph with $G$.
Therefore, for any $(i,j) \in \widehat{E}$, we have either $(i,j) \in E$ or $(j, i ) \in E$.
Then, for any $(i,j) \in \widehat{E}$, let 
\[
\widehat{\lambda}_{i,j} = \begin{cases}
\lambda_{i,j}, & {\rm if} ~ (i,j) \in E, \\
\mu_{j,i}, & {\rm if} ~ (j,i) \in E, \\
\end{cases}  \quad \mbox{and } \quad 
\widehat{\mu}_{i,j} = \begin{cases}
\mu_{i,j}, & {\rm if} ~ (i,j) \in E, \\
\lambda_{j,i}, & {\rm if} ~ (j,i) \in E. \\
\end{cases}
\]
It can be easily verified that problem \eqref{prob:tree_gnio} is equivalent to the following optimization problem:
\begin{equation*}
\min_{x\in\Re^{V} } \quad \sum_{i \in V } f_i(x_i) + \sum_{ (i,j) \in \widehat{E} } \widehat{\lambda}_{{i,j}} (x_i - x_{j})_{+} + \sum_{ (i,j) \in \widehat{E} } \widehat{\mu}_{i,j} (x_{j} - x_{i})_{+}.
\end{equation*}
Hence, we can assume that the directed tree $G$ in \eqref{prob:tree_gnio} is an arborescence.

\begin{figure}[htbp]
\centering
\begin{minipage}{0.28 \linewidth}
\centering
\begin{tikzpicture}[shorten >= 1pt]
\tikzstyle{vertex} = [circle,fill=black!25,minimum size=17pt,inner sep=0pt]
        \node[vertex] (1) { $1$ };
        \node[vertex] (2) [below left = 1] { $2$ };
        \node[vertex] (3) [below right =  1] { $3$ };
        \node[vertex] (4) [below left of = 3] { $4$ };
        \node[vertex] (5) [below right of = 3] { $5$ };
        \draw[->] (2) -- (1);
        \draw[->] (3) -- (1);
        \draw[->] (3) -- (5);
        \draw[->] (3) -- (4);
\end{tikzpicture} 
\centerline{(a) a directed tree $G$}
\end{minipage}
$\Rightarrow$
\begin{minipage}{0.28 \linewidth}
\centering
\begin{tikzpicture}[shorten >= 1pt]
\tikzstyle{vertex} = [circle,fill=black!25,minimum size=17pt,inner sep=0pt]
        \node[vertex] (1) { $1$ };
        \node[vertex] (2) [below left = 1] { $2$ };
        \node[vertex] (3) [below right =  1] { $3$ };
        \node[vertex] (4) [below left of = 3] { $4$ };
        \node[vertex] (5) [below right of = 3] { $5$ };

        \draw (2) -- (1);
        \draw (3) -- (1);
        \draw (3) -- (5);
        \draw (3) -- (4);
	\end{tikzpicture} 
\centerline{(b) underlying graph of $G$ and $\widehat{G}$}
\end{minipage}
$\Leftarrow$
\begin{minipage}{0.28 \linewidth}
\centering
\begin{tikzpicture}[shorten >= 1pt]
\tikzstyle{vertex} = [circle,fill=black!25,minimum size=17pt,inner sep=0pt]
        \node[vertex] (1) { $1$ };
        \node[vertex] (2) [below left = 1] { $2$ };
        \node[vertex] (3) [below right =  1] { $3$ };
        \node[vertex] (4) [below left of = 3] { $4$ };
        \node[vertex] (5) [below right of = 3] { $5$ };

        \draw[->] (1) -- (2);
        \draw[->] (1) -- (3);
        \draw[->] (3) -- (5);
        \draw[->] (3) -- (4);
	\end{tikzpicture} 
\centerline{(c) an arborescence $\widehat{G}$}
\end{minipage}
\caption{A directed tree $G$ and an arborescence $\widehat{G}$ that share the same underlying graph. }
\label{fig:to_rooted_tree}
\end{figure}
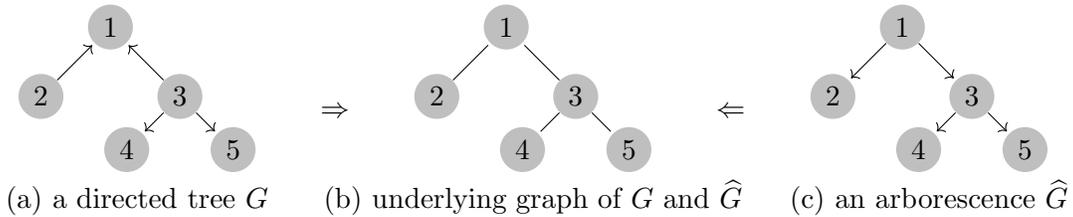

Next, we discuss the decomposition of $G$.
For an arborescence $G = (V,E)$, let $n = |V|$. 
Without loss of generality, we assume that the node $1$ is the root of $G$, and the nodes in $G$ are arranged such that for any edge $(i,j) \in E$, $i < j$ always holds.
Then, we define $G_n = G$, and let $G_{m-1} = (V_{m-1}, E_{m-1})$ be the subgraph of $G_{m} = (V_m , E_m)$ obtained by deleting the node $ m $ and the related edges from $G_m$, where $n \ge m\ge 2$.
Since for any $(i,j) \in E$, it holds that $i < j$, we know that the node $m$ must be a leaf node of $G_m$, hence,
according to \cite{west2001introduction}, $G_{m-1}$ is still a directed tree and $G_{m-1} \subset G_m$ for $m = 2,...,n$.
{It's easy to verify that} $V_m = \{ 1,2,...,m \}$ for $1 \le m \le n$, and $\{ (i_m, m+1) \} = E_{m+1} \backslash E_{m}$ with {$i_m \in V_m$} for $1 \le m \le n-1$.

\subsection{The update$^+$ subroutine}
We briefly describe the {\it update}$^+$ subroutine here, which corresponds to the case with $t^* > 0$.
Assume that we have obtained a guess $t_q$ of $t^*$ satisfying $0 = t_0 \le t_q < t^* \le \mu_{i_m, m+1} $, Meanwhile, the corresponding primal-dual optimal solution pair $(x^*(t_q)$, $z^*(t_q))$ and the active set $\mathcal{A}^q$ are available, such that $\mathcal{A}^q$ and $( x^*(t_q) ,z^*(t_q) ) $ are compatible {and $x^*_{i_m}(t_q) - x^*_{m+1}(t_q) < 0$}.
Then, the semi-closed formulas  \eqref{eq:update_x} and \eqref{eq:update_z} for the $\mathcal{A}^q$-reduced problem in Proposition \ref{prop:closedform} still hold.

Here, we need to search
\[
\Delta t_q := \max \{ \Delta t \mid \eqref{eq:cs_EB} \; \text{and} \; \eqref{eq:zstar_xatq} \; \text{hold for all} \; t \in [t_q, t_q + \Delta t] \}.
\]
First, let
\begin{equation}
\label{def:delta_EB_new}
	\Delta (E_{B^{q}}) = \left\{ \begin{aligned}
	& \min_{(i,j)\in E_{B^{q}}}  \Delta t_{i,j}, \quad \mbox{if } E_{B^{q}}\neq \emptyset, \\[2pt]
   & + \infty, \quad \mbox{otherwise},
\end{aligned}
	\right.
\end{equation}
where each {$\Delta t_{i,j} \ge 0$} solves:
\begin{equation*}
z_{i,j}^{q}(t_q + \Delta t_{i,j}) = { \mu_{i,j}}, \quad \mbox{if } \, i\triangleleft j, \quad \mbox{and} \quad  z_{i,j}^{q}(t_q + \Delta t_{i,j}) = {-\lambda_{i,j}}, \quad \mbox{if } \, j \triangleleft i.
\end{equation*}
Next, let $\Delta(\Omega^{q}) = \min \{ \Delta(\Omega^q_+), \Delta(\Omega^q_-)\}$, where 
\begin{equation}
\label{def:delta_O+_new}
	\Delta(\Omega^q_+) := \left\{
	\begin{aligned}
	& \Delta \overline{t} \mbox{ satisfying } 
	x_{B^q}(t_q + \Delta \overline{t}) = \min_{(i,j)\in \Omega^q_+ \cap {\cal A}^q_<} x^*_j(t_q), \quad  \mbox{if } \; \Omega^q_+ \cap {\cal A}^q_< \neq \emptyset, \\[2pt]
	& +\infty, \mbox{ otherwise,}
	\end{aligned}
	\right.
\end{equation}
and 
\begin{equation}
\label{def:delta_O-_new}
	\Delta(\Omega^q_-):= \left\{
	\begin{aligned}
	& \Delta \overline{t} \mbox{ satisfying } 
	x_{B^q}(t_q + \Delta \overline{t} ) = \min_{(i,j) \in \Omega^q_-  \cap {\cal A}^q_>} x^*_i(t_q), \quad  \mbox{if} \; \Omega^q_- \cap {\cal A}^q_> \neq \emptyset, \\[2pt]
	& +\infty, \mbox{ otherwise.}
	\end{aligned}
	\right.
\end{equation}
Then, $\Delta t_q = \min \{ \Delta (E_{B^q}), \Delta (\Omega^q) \}$.
Compute $\Delta \widetilde{t}_q \ge 0$ via solving $x^q_{i_m}(t_q + \Delta \widetilde{t}_q) - x^q_{m+1}(t_q + \Delta \widetilde{t}_q) = 0$, and set
\[
t_{q+1} = \min \{ t_q + \Delta t_q, t_q + \Delta \widetilde{t}_q, \mu_{i_m, m+1} \}.
\]

If $t_{q+1} < t^*$, we will update the active set $\mathcal{A}^{q+1}$ in the following fashion.
Let $\mathcal{M}(E_{B^{q}}) = \mathcal{M}(E_{B^{q}}^+) \cup \mathcal{M}(E_{B^{q}}^-)$ with
\begin{equation*}
\begin{cases}
\mathcal{M}(E_{B^{q}}^+) \;=\; \{ (i,j) \in E_{B^{q}}  \mid \Delta t_{i,j} = \Delta t_q, \; \mbox{and} \; i \triangleleft j \},\\
\mathcal{M}(E_{B^{q}}^-) \;=\; \{ (i,j) \in E_{B^{q}}  \mid \Delta t_{i,j} = \Delta t_q, \; \mbox{and} \; j \triangleleft i \},\\
\end{cases}
\end{equation*}
and $\mathcal{M}(\Omega^q) = \mathcal{M}(\Omega^q_+) \cup \mathcal{M}(\Omega^q_-)$ with 
\[
\begin{cases}
\mathcal{M}(\Omega^q_+) \;=\; \{ (i,j) \in \Omega^q_+ \cap {\cal A}(t_q)_< \mid x^q_{i}(t_q + \Delta t_q ) = x^*_j(t_q) \},  \\
\mathcal{M}(\Omega^q_-) \;=\; \{ (i,j) \in \Omega^q_- \cap {\cal A}(t_q)_> \mid {x^q_{j}}(t_q + \Delta t_q ) = x^*_i(t_q) \}. \\
\end{cases}
\]
Then, $\mathcal{A}^{q+1}$ is obtained via
\begin{equation}
\label{eq:update_activeset_new}
\begin{cases}
\mathcal{A}^{q+1}_{=} ={} \big( \mathcal{A}^{q}_{=} \cup \mathcal{M}(\Omega^q)   \big)  \backslash \mathcal{M}(E_{B^{q}}),  \\
\mathcal{A}^{q+1}_{>} ={} \big( \mathcal{A}^{q}_{>} \cup \mathcal{M}(E_{B^{q}}^-) \big) \backslash \mathcal{M}( \Omega^q_-),  \\
\mathcal{A}^{q+1}_{<} ={} \big( \mathcal{A}^{q}_{<} \cup \mathcal{M}(E_{B^{q}}^+) \big) \backslash \mathcal{M}(\Omega^q_+).   \\
\end{cases}
\end{equation}
We summarize the {\it update}$^+$ subroutine in Algorithm \ref{alg:update_+}.
\begin{algorithm}[htbp]
	\caption{ $(t_{q+1}, x^*(t_{q+1}), z^*(t_{q+1}), \mathcal{A}^{q+1}, t^*) = \mbox{{\bf update}}^{+}(t_q, x^*(t_q), z^*(t_q), \mathcal{A}^{q},\mu)$}
	\label{alg:update_+}
	\begin{algorithmic}[1]
	\State {{\bf Input}: $(t_q, x^*(t_q), z^*(t_q), \mathcal{A}^{q})$, $\mu \ge 0$; } 	
	\State {Compute $\Delta (E_{B^{q}}),  \Delta(\Omega^q_+), \Delta(\Omega^q_-)$ via definitions \eqref{def:delta_EB_new}, \eqref{def:delta_O+_new} and \eqref{def:delta_O-_new}}
		\State {$ \Delta(\Omega^q) = \min \{ \Delta(\Omega^q_-) , \Delta(\Omega^q_+) \}$} 
		\State{$ \Delta t_q = \min \{ \Delta (E_{B^{q}}), \Delta(\Omega^q) \}  $ }
      \State {$\Delta^*= x_{i_m}^q(t_q + \Delta t_q) - x_{m+1}^q(t_q + \Delta t_q) $ }		
	  \If {$\Delta^* \le 0$ }	
	  \State {$t_{q+1} = \min \{ t_q + \Delta t_q , \mu   \}$}
	  \Else
	  \State {$\Delta \widetilde{t}_q = -t_q + \argmin\limits_{t} \; \left\{ (\sum_{i \in V_{B^{q}}} f_i)^*( t + \beta^{q} ) + (f_{m+1}^*)(-t) \right\}$ }
	  \State {$t_{q+1} = \min \{ t_q + \Delta \widetilde{t}_q , \mu  \}$ }
	  \EndIf
	  \State{$( x^*(t_{q+1}), z^*(t_{q+1}) ) = (x^{q}( t_{q+1} ), \widetilde{z}^{q}( t_{q+1} ) )$}
	  \If { $t_{q+1} = \mu$ \textbf{or} $x_{i_m}^*(t_{q+1}) = x_{m+1}^*(t_{q+1})$ }
			\State{ $t^* = t_{q+1}$ }
			\State{ Let $\mathcal{A}^{q+1} = \{ (i,j, \#) \mid (i,j) \in E_m, \; x_i^*(t_{q+1}) \, \# \, x_j^*(t_{q+1}) \}$ }
		\Else
			\State{ $t^* = \emptyset$}
			\State{ Update $\mathcal{A}^{q+1}$ from $\mathcal{A}^{q}$ via \eqref{eq:update_activeset_new}}
		\EndIf  
			
		\State {{\bf Output}:$(t_{q+1}, x^*(t_{q+1}), z^*(t_{q+1}), \mathcal{A}^{q+1}, t^*)$ }
	\end{algorithmic}
\end{algorithm}

\subsection{An illustration of the ASRA}
We provide an example of applying the ASRA for solving problem \eqref{prob:tree_gnio}.
Let $G = (V, E)$ be the directed tree shown in Figure \ref{fig4a}, where $V = \{ 1,2,3,4,5 \}$ and $E = \{ (1,2), (1,3), (3,4), (3,5) \}$.
Let 
\[
f_i(x_i) = \frac{1}{2} (x_i - y_i)^2 \; {\rm for}\; i=1,...,4, \; {\rm where} \; y = (4,2,2,8) \in \Re^4, \; {\rm and} \; f_5(x_5) = x_5^2  + \frac{1}{4} x_5^4, 
\]
and we set the regularization parameters as follows:
\[
(\lambda_{1,2}, \mu_{1,2}) =(+\infty, 0), \; (\lambda_{1,3},\mu_{1,3}) = (0, +\infty), \; (\lambda_{3,4},\mu_{3,4}) =( 0, 4),  \; {\rm and} \; (\lambda_{3,5}, \mu_{3,5}) = (3,3).
\]
The detailed steps of the ASRA are given below:
\begin{itemize}
\item[{(\rm i)}] First, we initialize {with} $x^{(1)}_1 = 4$.

\item[{(\rm ii)}]  Since $(f^{*}_2)'(x^{(1)}_1) > 0$, it holds that $t^* \le 0$.
We start from $t_0 = 0$ and terminate at $t^* = t_1 = -1$.
Then, the $G_2$-optimal pair $(x^{(2)}, z^{(2)})$ is $x^{(2)} = (3,3)$ and $z^{(2)}_{1,2} = -1$.

\item[{(\rm iii)}] Since $(f^{*}_3)'(x^{(2)}_1) > 0$, we have $t^* \le 0$.
Here, we have $t^* = t_0 = -\lambda_{1,3} = 0$.
The corresponding $G_3$-optimal pair $(x^{(3)}, z^{(3)})$ is $ x^{(3)} = (3,3,2)$, and $ z^{(3)}_{1,2} = -1, z^{(3)}_{1,3} = 0$.

\item[{(\rm iv)}] Since $(f^{*}_4)'(x^{(3)}_3) < 0$, {it holds that} $t^* \ge 0$.
Starting at $t_0 = 0$, we first arrive at $t_1 = 1$, and modify the corresponding active set, i.e., replace $(2,3,>)$ with $(2,3,=)$, then continue the searching of $t^*$.
We terminate at $t^* = t_2 = 4$.
Therefore, the $G_4$-optimal pair $(x^{(4)}, z^{(4)})$ is $x^{(4)} = (4,4,4,4)$, and $z^{(4)}_{1,2} = -2, z^{(4)}_{1,3} = 2, z^{(4)}_{3,4} = 4$.
\item[{(\rm v)}] Since $(f^{*}_5)'(x^{(4)}_3) > 0$, we have $t^* \le 0$.
Starting from $t_0 = 0$, we first arrive $t_1 = 0$ and replace $(3,4,=)$ with $(3,4,<)$ in the corresponding active set.
Then, we terminate the searching at $t^* = t_2 = -3$, and the $G_5$-optimal pair $(x^{(5)}, z^{(5)})$ is  $x^{(5)} = (3,3,3,4,1)$, and $ z^{(5)}_{1,2} = -1, z^{(5)}_{1,3} = 0, z^{(5)}_{3,4} = 4, z^{(5)}_{3,5} = -3$.
\end{itemize}
Thus, the optimal solution to problem \eqref{prob:tree_gnio} is $x^* = (3,3,3,4,1)$. An illustration of the above procedure is presented in Figure \ref{fig:recursion}.

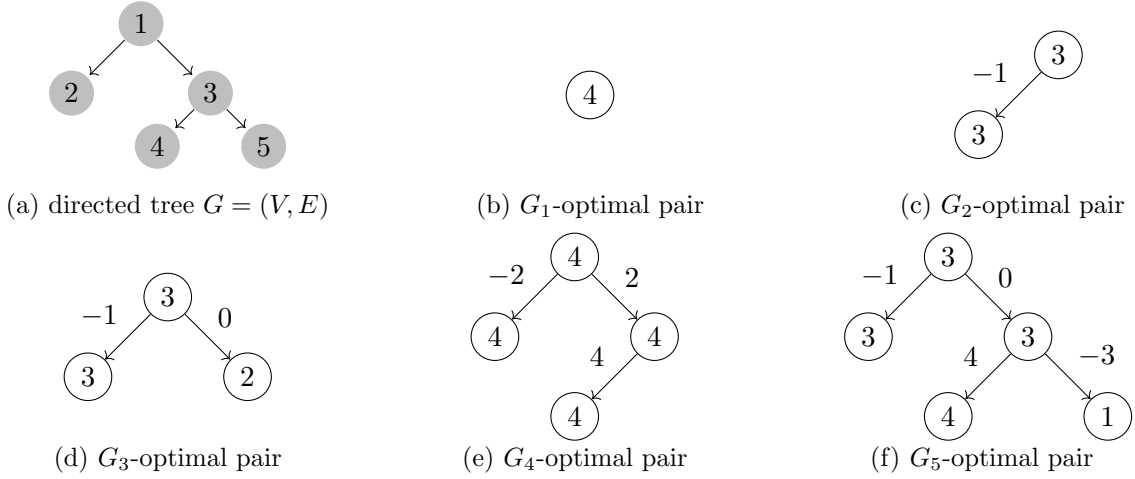
\begin{figure}[]
\begin{subfigure}{0.32 \linewidth}
\parbox[][2.4cm][c]{\linewidth}{
\centering
\begin{tikzpicture}[shorten >= 1pt]
\tikzstyle{vertex} = [circle,fill=black!25,minimum size=17pt,inner sep=0pt]
        \node[vertex] (1) { $1$ };
        \node[vertex] (2) [below left = 1] { $2$ };
        \node[vertex] (3) [below right =  1] { $3$ };
        \node[vertex] (4) [below left of = 3] { $4$ };
        \node[vertex] (5) [below right of = 3] { $5$ };

        \draw[->] (1) -- (2);
        \draw[->] (1) -- (3);
        \draw[->] (3) -- (5);
        \draw[->] (3) -- (4);
\end{tikzpicture} 
}
\caption{directed tree $G =(V,E)$}
\label{fig4a}
\end{subfigure}
\begin{subfigure}{0.32 \linewidth}
\parbox[][2.4cm][c]{\linewidth}{
\centering
\begin{tikzpicture}[node distance = 15mm]
\tikzstyle{vtx} = [draw,circle,minimum size=18pt,inner sep=0pt]
\tikzset{
	dot/.style={rotate= -45,font=\LARGE}, 
}
        \node[vtx, label = above :  ] (1) { $4$ };

\end{tikzpicture} 
}
\caption{$G_1$-optimal pair}
\label{fig4a1}
\end{subfigure}
\begin{subfigure}{0.32 \linewidth}
\parbox[][2.4cm][c]{\linewidth}{
\centering
\begin{tikzpicture}[node distance = 15mm]
\tikzstyle{vtx} = [draw,circle,minimum size=18pt,inner sep=0pt]
\tikzset{
	dot/.style={rotate= -45,font=\LARGE}, 
}
        \node[vtx, label = above :  ] (1) { $3$ };
        \node[vtx, label = above left :  ]  [below left of = 1] (2) {$3$};

        \draw[->] (1) -- node[above left] {$-1$} (2);
\end{tikzpicture} 
}
\caption{$G_2$-optimal pair}
\label{fig4b}
\end{subfigure}
\begin{subfigure}{0.32 \linewidth}
\parbox[][2.7cm][c]{\linewidth}{
\centering
\begin{tikzpicture}[node distance = 15mm]
\tikzstyle{vtx} = [draw,circle,minimum size=18pt,inner sep=0pt]
\tikzset{
	dot/.style={rotate= -45,font=\LARGE}, %
}
        \node[vtx, label = above :  ] (1) { $3$ };
        \node[vtx, label = above left :  ]  [below left of = 1] (2) {$3$};
        \node[vtx, label = above right : ]  [below right of = 1] (3) {$2$};
        \draw[->] (1) -- node[above left] {$-1$} (2);
        \draw[->] (1) -- node[above right] {$0$} (3);

	\end{tikzpicture}  
}
\caption{$G_3$-optimal pair}
\label{fig4c}
\end{subfigure}
\begin{subfigure}{0.32 \linewidth}
\parbox[][2.7cm][c]{\linewidth}{
\centering
\begin{tikzpicture}[node distance = 15mm]
\tikzstyle{vtx} = [draw,circle,minimum size=18pt,inner sep=0pt]
\tikzset{
	dot/.style={rotate= -45,font=\LARGE}, %
}
        \node[vtx, label = above :  ] (1) { $4$ };
        \node[vtx, label = above left :  ]  [below left of = 1] (2) {$4$};
        \node[vtx, label = above right : ]  [below right of = 1] (3) {$4$};
        \node[vtx, label = above : ]  [below left of = 3] (4) {$4$};

        \draw[->] (1) -- node[above left] {$-2$} (2);
        \draw[->] (1) -- node[above right] {$2$} (3);
        \draw[->] (3) -- node[above left] {$4$} (4);

	\end{tikzpicture} 
}
\caption{$G_4$-optimal pair}
\label{fig4d}
\end{subfigure}
\begin{subfigure}{0.32 \linewidth}
\parbox[][2.7cm][c]{\linewidth}{
\centering
\begin{tikzpicture}[node distance = 15mm]
\tikzstyle{vtx} = [draw,circle,minimum size=18pt,inner sep=0pt]
\tikzset{
	dot/.style={rotate= -45,font=\LARGE}, %
}
        \node[vtx, label = above :  ] (1) { $3$ };
        \node[vtx, label = above left :  ]  [below left of = 1] (2) {$3$};
        \node[vtx, label = above right : ]  [below right of = 1] (3) {$3$};
        \node[vtx, label = above : ]  [below left of = 3] (4) {$4$};
        \node[vtx, label = above : ]  [below right of = 3] (5) {$1$};
        \draw[->] (1) -- node[above left] {$-1$} (2);
        \draw[->] (1) -- node[above right] {$0$} (3);
        \draw[->] (3) -- node[above left] {$4$} (4);
        \draw[->] (3) -- node[above right] {$-3$} (5);
	\end{tikzpicture} 
	}
\caption{$G_5$-optimal pair}
\label{fig4e}
\end{subfigure}
\caption{An example of applying {the ASRA} for solving problem \eqref{prob:tree_gnio} with given $G = (V,E)$. The first subfigure represents the directed tree $G = (V,E)$, and the remaining five subfigures are the illustrations of the $G_m$-optimal pairs for $m = 1,2,3,4,5$, where the values of $x_i$ for $i \in V$ are presented within the circles while the values of $z_{i,j}$ for $(i,j)\in E$ are presented above the edges.}
\label{fig:recursion}
\end{figure}

\bibliographystyle{siam}

\end{document}